\newcommand{\mvdash}[0]  {
	{\rightarrow}
}
\newcommand{\ran}[0]  {
  \mathit{Ran}
}
\newcommand{\disc}[0]  {
  \mathit{Disc}
}
\newcommand{\iterm}[0]  {%
\{\bullet\}
}
\newcommand{\iinit}[0]  {%
\{\}
}
\newcommand{\iprod}[0]  {%
\times
}
\newcommand{\icoprod}[0]  {%
\sqcup
}
\newcommand{\comma}[2]  {%
{#1\!\!\downarrow\!\!{#2}}
}
\newcommand{\catl}[1]  {
  \mathbb{#1}
}
\newcommand{\catw}[1]  {
  \mathbf{#1}
}
\newcommand{\onecat}[0]  {
  \catw{Cat}
}
\newcommand{\cat}[0]  {
  \catw{cat}
}
\newcommand{\word}[1]  {
  \mathit{#1}
}
\newcommand{\op}[1]  {
{{#1}^\word{op}}
}
\newcommand{\mor}[3]  {
  #1 \colon #2 \rightarrow #3
}
\newcommand{\tuple}[1]  {
	\langle #1 \rangle
}
\newcommand{\id}[1]  {
  \word{id}_{#1}
}
\newbox\anglebox
\newbox\aanglebox
\begin{document}

\title{Logical systems I:\\Lambda calculi through discreteness}

\author{Michal R. Przybylek}
\institute{Faculty of Mathematics, Informatics and Mechanics\\
University of Warsaw\\
Poland}
\maketitle

\begin{abstract}
This paper shows how internal models for polymorphic lambda calculi arise in any 2-category with a notion of discreteness. We generalise to a 2-categorical setting the famous theorem of Peter Freyd saying that there are no sufficiently (co)complete non-degenerate categories. As a simple corollary, we obtain a variant of Freyd theorem for categories internal to any \emph{tensored} category. Also, with help of introduced concept of an associated category, we prove a representation theorem relating our internal models with well-studied fibrational models for polymorphism.
\end{abstract}
%\IEEEpeerreviewmaketitle

\section{Introduction}
%\IEEEPARstart
\label{s:introduction}
The well-known Lambek-Curry-Howard isomorphism \cite{lambek} in its simplest form establishes a link between cartesian closed categories, simply typed lambda calculi and propositional intuitionistic logics:
\begin{center}
\begin{tabular}{c|c|c}
Category & $\lambda$-calculus & Logic\\
\hline
$1$ & $\{\bullet\}$ & $\top$\\
$A \times B$ & $A \times B$ & $A \wedge B$\\
$B^A$ & $A \rightarrow B$ & $A \Rightarrow B$\\
$0$ & $\emptyset$ & $\bot$\\
$A \sqcup B$ & $A \sqcup B$ & $A \vee B$\\
\end{tabular}
\end{center}
To a \emph{two}-category theorist, a category is just an object in a very well-behaved 2-category $\onecat$ of (locally small) categories. A natural question then is to ask what properties a 2-category has to posses to allow establishing the above connection inside the 2-category; and more importantly --- what can be gained by such considerations?

An open and still very active area of research in category theory is to give a reasonable characterisation of a 2-category that allows describing categorical constructions inside the 2-category. Some constructions like adjunctions, Kan extensions/liftings and fibrations/opfibrations \cite{jonpart} are easily definable in any 2-category. Others like pointwise Kan extensions/liftings require existence of particular finite limits. Some others like internal limits/colimits are much harder and require additional conditions or structures on the 2-category \cite{proarrows}\cite{proarrows2}\cite{yoneda}\cite{2topos}. In this paper we shall investigate \emph{internal} 2-categorical constructions through \emph{discreteness}. The following definition is standard.
\begin{definition}[Discreteness]\label{d:discreteness:1}
Let $\xy \morphism(0,0)|a|/@{>}@<3pt>/<300,0>[\catl{W}`\catl{C};U] \morphism(0,0)|b|/@{<-}@<-3pt>/<300,0>[\catl{W}`\catl{C};F]\endxy$ be an adjunction between categories $\catl{C}$ and $\catl{W}$ with $F$ left adjoint to $U$. This adjunction gives a notion of discreteness on category $\catl{W}$ if the unit of the adjunction is an isomorphism.
\end{definition}
Because the unit of an adjunction $F \dashv U$ is an isomorphism if and only if the left adjoint $F$ is fully faithful, we may identify $\catl{C}$ with the full image of $F$ and write $\mathit{Disc}_F(\catl{W})$ for it, dropping the subscript if $F$ is known from the context. The right adjoint to the inclusion will be usually denoted by $|{-}|$, so that for an object $A \in \catl{C}$ we have $U(F(A)) = |A|$, and the coreflection $|A| \rightarrow A$ (the counit of the adjunction) will be denoted by $\epsilon$.
%You have to definie discreteness this way to have preservation of discrete objects under externalisation.
One may find examples of discreteness.
\begin{example}[Discrete graph]\label{e:discrete:graph}
Let $\catw{Graph}$ be the category of undirected graphs and graph homomorphisms. Its full subcategory $\mathit{Disc}(\catw{Graph})$ consisting of graphs without edges gives a notion of discreteness on $\catw{Graph}$, with a discretisation functor $\mor{|{-}|}{\catw{Graph}}{\mathit{Disc}(\catw{Graph})}$ discarding all edges from a graph. Clearly, there is a natural isomorphism $\hom(D, G) \approx \hom(D, |G|)$, where $D$ is a discrete graph.
\end{example}
\begin{example}[Discrete topological space]\label{e:discrete:space}
Let $\catw{Top}$ be the category of topological spaces and continuous functions. Its full subcategory $\mathit{Disc}(\catw{Top})$ consisting of topological spaces for which every set is open, gives a notion of discreteness on $\catw{Top}$, with a discretisation functor $\mor{|{-}|}{\catw{Top}}{\mathit{Disc}(\catw{Top})}$ ``upgrading'' a topology on a space to the finest topology (i.e.~every set is open) on the space --- every function from a discrete space $D$ to any space $W$ is automatically continuous, since inverse image of any set is open in $D$; therefore, we have a natural isomorphism $\hom(D, W) \approx \hom(D, |W|)$.
\end{example}
A special care has to be taken in case $\catl{W}$ is a 2-category and $\catl{C}$ is a \mbox{1-category} --- here a notion of discreteness is induced by a \mbox{1-adjunction} ${F \vdash U}$ between underlying 1-categories with $F$ a 2-fully faithful functor; that is, there can be no non-trivial 2-morphisms in the full subcategory on the image of $F$.
\begin{definition}[Discreteness of a 2-category]\label{d:discreteness}
Let $\xy \morphism(0,0)|a|/@{>}@<3pt>/<300,0>[\catl{W}`\catl{C};U] \morphism(0,0)|b|/@{<-}@<-3pt>/<300,0>[\catl{W}`\catl{C};F]\endxy$ be a 1-adjunction between a 1-category $\catl{C}$ and a 2-category $\catl{W}$, where $F$ is a 2-fully faithful functor which is left 1-adjoint to a 1-functor $U$. This adjunction gives a notion of discreteness on category $\catl{W}$ if the unit of the adjunction is an isomorphism.
\end{definition}
\begin{example}[Discrete category]\label{e:discrete:category}
Let $\cat$ be the 2-category of small categories, functors and natural transformations. The category $\catw{Set}$ of sets and functions is its full subcategory inducing the notion of discreteness on $\cat$. The discretisation functor $\mor{|{-}|}{\cat}{\catw{Set}}$ sends a category to its underlying set of objects. The natural isomorphism:
$$\hom_{\cat}(X, \catl{C}) \approx \hom_\catw{Set}(X, |\catl{C}|)$$
follows directly from the definition of a functor. The situation generalises to any 2-category $\cat(\catl{C})$ of categories internal to a finitely complete category $\catl{C}$. Moreover, this situation also generalizes to any 2-category $\onecat{V}$ of categories enriched in a monoidal category $\catl{V}$ with initial object.
\end{example}
Although $\cat$ is a 2-category, we could not demand the inclusion \mbox{$\catw{Set} \rightarrow \cat$} to have right 2-adjoint --- clearly because there are no non-trivial 2-morphisms in a 1-category. In this example we could also characterise discrete categories $X$ as precisely these categories that satisfy the property: for every category $\catl{C}$ and every parallel functors $\mor{F, G}{\catl{C}}{X}$ there are no non-trivial (i.e.~other than identities) natural transformation $F \rightarrow G$. This suggests a very important generic notion of discreteness, which we shall call ``the canonical notion of discreteness''. 
\begin{definition}[Canonical discreteness]\label{d:discreteness:canonical}
Let $\catl{W}$ be a 2-category. Let us write $\mathit{Disc}(\catl{W})$ for the full subcategory of $\catl{W}$ consisting of these objects $X$, for which the category $\hom(C, X)$ is discrete in the sense of Example \ref{e:discrete:category} for every object $C \in \catl{W}$. We shall say that $\catl{W}$ has the canonical notion of discreteness if the inclusion $\mathit{Disc}(\catl{W}) \rightarrow \catl{W}$ has right 1-adjoint.
\end{definition}
Not every 2-category has the canonical notion of discreteness: consider the full 2-subcategory of $\cat$ consisting of all small categories excluding \emph{infinite} discrete categories. Clearly, the inclusion from the category $\catw{Set}_{\aleph_0}$ of finite sets and functions does not have a right adjoint.

Throughout the paper the concept of discreteness serves threefold purpose: in the next section it allows us to capture a good notion of internal cartesian closedness and a good notion of internal products, whereas in the third section it allows us to introduce the concept of an associated category.

% we show that with every finitely complete 2-category that admits any notion of discreteness, one may associate a 2-functor realising that 2-category inside a category of internal categories in such a way that the internal models of lambda calculi are preserved.

Our first contribution is to extend the definition of fibred/internal connectives and polymorphism to an arbitrary 2-category with a notion of discreteness, and to show that a naive approach as in \cite{2topos} does not work properly. To justify that our proposed definitions give an appropriate extension, we provide a concept of an ``associated category''. This leads to our second contribution --- we show that with every finitely complete 2-category $\catl{W}$ that admits a notion of discreteness, one may associate a 2-functor realising $\catl{W}$ in a 2-category $\catw{Cat}(\disc(\catl{W}))$ of categories internal to the discrete objects of $\catl{W}$, in such a way that internal connectives and polymorphic objects are preserved. This realisation gives an equivalence of 2-categories if and only if discrete objects are dense. This sheds new light on the nature of fibred (co)products and their stability condition (i.e. the Beck-Chevalley condition). Moreover, because in the world of enriched categories discrete objects are not generally dense, we have to use our 2-categorical definitions since the usual fibrational definitions lose information about categories.
For the third contribution, we generalise the classical result of Freyd saying that the (co)completeness of a non-degenerate category have to be at a lower level on the set-theoretic hierarchy then the category itself, which is just another incarnation of Russel's paradox, Cantor's diagonal argument, Goedel's incompleteness theorem, or the result of Reynolds about non-existence of non-degenerate set-theoretic models for parametric polymorphism \cite{reynolds}. We show that if a 2-category is sufficiently rich, then its objects cannot have all internal products (therefore cannot be internally complete), unless are degenerated. Using the concept of an associated category, we obtain the Freyd theorem for categories internal to any \emph{tensored} category.

\section{Internal lambda calculi}
\label{s:models}
Let us recall that in any cartesian category $\catl{W}$ (i.e.~category with finite products) every object $A \in \catl{W}$ carries a unique comonoid structure $\xy \morphism(0,0)|a|/<-/<300,0>[1`A;!] \morphism(300,0)|a|/->/<400,0>[A`A \times A;\Delta]\endxy$, where $\Delta = \langle\mathit{id}, \mathit{id}\rangle$ is the diagonal morphism. In case $\catl{W} = \onecat$, we obtain the usual notion of terminal (initial) object and binary products (coproducts) in $\catl{A} \in \onecat$ by taking right (resp. left) adjoint to the comonoid structure on $\catl{A}$. It seems reasonable then, to internalise the notion of cartesian structure inside any cartesian 2-category $\catl{W}$ in the following way.
\begin{definition}[Internally (co)cartesian connectives]\label{d:internal:cc}
Let us assume that a 2-category $\catl{W}$ has finite products. An object $A \in \catl{W}$ has an \emph{internal} terminal value $\iterm_A$ (initial value $\iinit_A$) if the unique morphism $A \overset{!}{\rightarrow} 1$ has right adjoint ${1 \overset{\iterm_A}\rightarrow A}$ (resp.~left adjoint ${1 \overset{\iinit_A}\rightarrow A}$), and it has \emph{internal} products $\iprod_A$ (coproducts $\icoprod_A$) if the diagonal $A \overset{\Delta_A}\rightarrow A \times A$ has right adjoint ${A \times A \overset{\iprod_A}\rightarrow A}$ (resp.~left adjoint ${A \times A \overset{\icoprod_A}\rightarrow A}$).
\end{definition}
Yoneda lemma for 2-categories\footnote{Yoneda lemma for $\onecat$-enriched categories.} implies that for any (locally small) \mbox{2-category}  $\catl{W}$ the assignment:
$$A \in \catl{W} \mapsto \hom_\catl{W}(-, A) \in \onecat^{\catl{W}^{op}}$$
extends to a fully faithful 2-embedding:
$$\mor{y}{\catl{W}}{\onecat^{\catl{W}^{op}}}$$
called ``2-Yoneda functor''. Therefore, a morphism $f$ is adjoint to $g$ in $\catl{W}$ iff the transformation $\hom(-, f)$ is adjoint to the transformation $\hom(-, g)$ in $\onecat^{\catl{W}^{op}}$. Because 2-Yoneda functor also preserves finite products, it is possible to coherently give an external characterisation of \emph{internal} connectives in $\catl{W}$, even in case $\catl{W}$ does not have all finite products. Generally, we shall say that an object $A \in \catl{W}$ has a \emph{virtual} property, if its representable 2-functor $\mor{\hom(-, A)}{\catl{W}^{op}}{\onecat}$ has that property as an object in $\onecat^{\op{\catl{W}}}$. Thus, an object $A \in \catl{W}$ has a virtual internal terminal value (initial value, products, coproducts) if $\mor{\hom(-, A)}{\catl{W}^{op}}{\onecat}$ has internal terminal value (resp.~false value, products, coproducts) as an object in $\onecat^{\op{\catl{W}}}$.
The essence of virtual values is that although sometimes we may not have an access to the defining morphisms, there is always a natural assignment of parametrised values via universal properties. Recalling from \cite{barr} (Chapter I, Sections 4 and 5) the notion of generalised elements, let us write $\tau_X, \sigma_X \in A$ for morphisms $X \to A$, and then, given $\mor{s}{A}{B}$, $s(\tau_X) \in B$ for $s \circ \tau_X$. If an object $A \in \catl{W}$ has an internal virtual terminal value, then for every object $X \in \catl{W}$ there is a natural way to form a constant element $1_X \in A$ sending everything from $X$ to the virtual terminal value of $A$ --- it is given by the functor $\mor{({\iterm_{\hom(-, A)}})_X}{1}{\hom(X, A)}$ applied to the single object of the terminal category $1$. Similarly, given two generalised elements $\tau_X, \sigma_X \in A$  there is a canonical generalised element $\tau_X \times \sigma_X \in A$, provided $A$ has virtual internal products.

The definition of $\emph{internal}$ cartesian closedness is less obvious. One may pursue an approach of Mark Weber \cite{2topos} (Definition 8.1) and say that an object $A$ of a 2-category with finite products is internally cartesian closed if for every global element $1 \overset{x}\rightarrow A$
the morphism
$A \to^{\mathit{id}_A \times x} A \times A \overset{\iprod_A}\to A$
has a right adjoint.
Unfortunately, this definition is inadequate in various contexts --- including fibred and internal categories. For this reason we shall call the above concept ``naive cartesian closedness''.
\begin{example}[Failure of naive cartesian closedness]\label{e:failure:ccc}
A split indexed category is cartesian closed iff its every fibre is cartesian closed and reindexing morphisms preserve cartesian closed structure. Let $\catl{A}$ be a cartesian closed category for which there exists a category $\catl{X}$ such that $\catl{A}^\catl{X}$ is not cartesian closed\footnote{One may take for $\catl{A}$ any non-trivial free cartesian closed category, then $\catl{A}^{\{0 \rightarrow 1\}}$ is never cartesian closed.}. The 2-Yoneda functor gives an indexed category:
$$\mor{\hom(-, \catl{A})}{\onecat^{op}}{\onecat}$$
which is naively cartesian closed as an object in $\onecat^{\op{\onecat}}$. However, it is not a cartesian closed indexed category --- the fibre $\hom(\catl{X}, \catl{A}) = \catl{A}^\catl{X}$ over $\catl{X}$ is not cartesian closed. The problem with the naive definition is that choosing an element $\mor{x_1}{1}{\hom(1, A)}$
by naturality of $x$, chooses constant morphisms in every fibre. Therefore, naive cartesian closedness expresses existence of exponents of ``constant objects''. 
\end{example}
We shall generalise the idea of cartesian closedness provided by Bart Jacobs\footnote{We would get a proper generalisation if we substituted the notion of discreteness with the notion of ``grupoidalness''. Nonetheless, for the purpose of this paper it suffices to work with much simpler, yet not 2-categorical, concept of discreteness.} in Definition 3.9 in \cite{jacobspaper} for fibrations and adopt it to arbitrary cartesian 2-categories with a notion of discreteness\footnote{There is also a general notion of an internally closed object within $\star$-autonomous 2-categories (Definition 10 in \cite{hop}), however it cannot be generalised to our setting because cartesian 2-$\star$-autonomous categories are necessarily degenerated.}.
\begin{definition}[Internally closed connectives]\label{d:internal:ccc}
Let $\catl{W}$ be a cartesian 2-category with a notion of discreteness. An object $A \in \catl{W}$ is \emph{internally} cartesian closed if it has internall products and the morphism:
$$A \times |A| \to^{\langle \iprod_A \circ (\mathit{id} \times \epsilon_A), \pi_{|A|}\rangle} A \times |A|$$
has a right adjoint, where $\epsilon_A$ is the counit of the adjunction that gives the notion of discreteness on $\catl{W}$. 
\end{definition}
Let us see that this definition works for split indexed categories.
%
%We shall explain this definition in the next two sections through the concept of an associated category. For now, let us observe that it coincides with the usual notion of cartesian closedness for fibrations.
\begin{example}[Internally cartesian closed indexed category]\label{e:indexed:ccc}
A split indexed category $\mor{\Phi}{\op{\catl{C}}}{\onecat}$ is discrete in the sense of Definition \ref{d:discreteness:canonical} iff it is discrete in the usual sense --- i.e.~each of its fibres is a discrete category. Therefore $\Phi$ is a cartesian closed indexed category iff it is internally cartesian closed in the sense of Definition \ref{d:internal:ccc}.
\end{example}
%\begin{example}[Conduche fibration]\label{e:conduche:closed}
%
%\end{example}
We would like to extend the calculus of parametrised elements to internally closed connectives, but Example \ref{e:failure:ccc} shows that it is impossible in the full generality --- if $\catl{A}, \catl{X} \in \onecat$ are such that $\catl{A}$ is cartesian closed and $\catl{A}^\catl{X}$ is \emph{not} cartesian closed, then there is no way to form an exponent $\tau_\catl{X}^{\sigma_\catl{X}}$ for every pair of parametrised elements $\tau_\catl{X}, \sigma_\catl{X} \in \catl{A}$. However, this is possible if $\catl{X}$ is discrete. We shall postpone the proof of the following theorem until Section \ref{s:associated} (Theorem \ref{t:external:models}).
\begin{theorem}[Parametrised simply typed lambda calculus]\label{d:parametrised:lambda}
Let $\catl{W}$ be a cartesian 2-category with a notion of discreteness, and assume that an object $A \in \catl{W}$ is internally cartesian closed. Then for every discrete object $X \in \disc(\catl{W})$ the category $\hom_{\catl{W}}(X, A)$ is cartesian closed. Moreover, if $A$ is internally cocartesian (i.e~has an internal initial value and internal binary coproducts), then $\hom_{\catl{W}}(X, A)$ is cocartesian.
\end{theorem}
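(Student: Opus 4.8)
The plan is to transport the internal structure of $A$ to honest categorical structure in each hom-category by applying the representable 2-functor $\hom_\catl{W}(X,-)$. As recorded after Definition~\ref{d:internal:cc}, a morphism $f$ is left adjoint to $g$ in $\catl{W}$ exactly when $\hom(-,f)\dashv\hom(-,g)$ in $\onecat^{\op{\catl{W}}}$; in particular the 2-functor $\hom_\catl{W}(X,-)$ preserves adjunctions (being a 2-functor) and finite products (being representable). All the defining data of internal (co)cartesian closedness are adjunctions between morphisms of $\catl{W}$, so each pushes forward to an adjunction between endofunctors on $\hom_\catl{W}(X,A)$. The crux of the theorem is that the finite-(co)product structure transports for \emph{every} $X$, whereas the exponential adjunction yields genuine exponentials only once $X$ is discrete --- which is precisely what rules out the pathology of Example~\ref{e:failure:ccc}.

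First I would produce the finite products. Applying $\hom_\catl{W}(X,-)$ to $\Delta_A\dashv\iprod_A$, and using product preservation to identify $\hom(X,A\times A)\simeq\hom(X,A)\times\hom(X,A)$ so that $(\Delta_A)_*$ becomes the diagonal, exhibits $(\iprod_A)_*$ as a right adjoint to the diagonal; thus $\hom_\catl{W}(X,A)$ has binary products, with $\tau\times\rho=\iprod_A\circ\langle\tau,\rho\rangle$ for $\tau,\rho\colon X\to A$. The terminal object comes the same way from the internal terminal value: since $A\overset{!}{\to}1$ has right adjoint $\iterm_A$ and $\hom(X,1)\simeq 1$, the functor $(\iterm_A)_*\colon 1\to\hom_\catl{W}(X,A)$ selects a terminal object. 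Neither step uses discreteness.

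The heart of the argument is the exponential. Write $\psi_A$ for the right adjoint of the comparison $\phi_A=\langle\iprod_A\circ(\id{A}\times\epsilon_A),\pi_{|A|}\rangle$ of Definition~\ref{d:internal:ccc}. Applying $\hom_\catl{W}(X,-)$ and the isomorphism $\hom(X,A\times|A|)\simeq\hom(X,A)\times\hom(X,|A|)$ gives an adjunction $(\phi_A)_*\dashv(\psi_A)_*$ on $\hom(X,A)\times\hom(X,|A|)$, and a short computation identifies $(\phi_A)_*(\tau,d)=(\tau\times\overline d,\,d)$, where $\overline d=\epsilon_A\circ d$ and $\times$ is the product built above. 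Here discreteness of $|A|$ enters: since $X$ and $|A|$ are discrete and $F$ is 2-fully faithful (Definition~\ref{d:discreteness}), the hom-category $D_X:=\hom_\catl{W}(X,|A|)$ is discrete, whence $\hom(X,A)\times D_X\cong\coprod_{d\in D_X}\hom(X,A)$ and $(\phi_A)_*$ preserves this coproduct decomposition, acting on the $d$-th summand as $(-)\times\overline d$. An adjunction whose left leg respects a coproduct decomposition over a discrete base splits summandwise, so each $(-)\times\overline d\colon\hom(X,A)\to\hom(X,A)$ acquires a right adjoint.

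It remains to realise \emph{every} object as some $\overline d$, and this is the second, essential use of discreteness --- now of $X$. By the universal property of the counit $\epsilon_A\colon|A|\to A$, every $\sigma\colon X\to A$ out of a discrete $X$ factors uniquely as $\sigma=\epsilon_A\circ\hat\sigma$ with $\hat\sigma\in D_X$; hence $\sigma=\overline{\hat\sigma}$, and as $d$ ranges over $D_X$ the objects $\overline d$ exhaust those of $\hom_\catl{W}(X,A)$. Therefore $(-)\times\sigma$ has a right adjoint for every object $\sigma$, which together with the finite products makes $\hom_\catl{W}(X,A)$ cartesian closed. The cocartesian ``Moreover'' is dual and needs no discreteness: applying $\hom_\catl{W}(X,-)$ to $\iinit_A\dashv{!}$ and $\icoprod_A\dashv\Delta_A$ yields an initial object and binary coproducts. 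I expect the main obstacle to be this exponential step: one must check that the single internal adjunction $\phi_A\dashv\psi_A$ really decomposes into one exponential per discrete exponent --- using discreteness of $|A|$ to split the second factor into a set of fibres and matching $(-)\times\overline d$ with the categorical product --- and then that discreteness of $X$ forces every object to be such an exponent, i.e.\ the factorisation through $\epsilon_A$ that fails for non-discrete $X$ and leaves only the exponentials by ``constant objects'' of Example~\ref{e:failure:ccc}.
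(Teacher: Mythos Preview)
Your argument is correct. Applying $\hom_\catl{W}(X,-)$ transports the product and terminal adjunctions verbatim, and for exponentials your two uses of discreteness are exactly right: discreteness of $|A|$ makes $D_X=\hom_\catl{W}(X,|A|)$ a set so that the single adjunction $(\phi_A)_*\dashv(\psi_A)_*$ on $\hom(X,A)\times D_X$ splits into a family of adjunctions $(-)\times\overline d\dashv(-)^{\overline d}$ indexed by $d\in D_X$ (the right adjoint fixes the second coordinate by a short hom-set calculation using that $D_X$ has only identity arrows), and discreteness of $X$ guarantees via the universal property of $\epsilon_A$ that every object of $\hom_\catl{W}(X,A)$ is some $\overline d$.

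The paper takes a different, more structural route. It postpones the proof to Section~\ref{s:associated} and derives it as a corollary of Theorem~\ref{t:external:models}: the family 2-functor $\mathit{fam}_F\colon\catl{W}\to\onecat^{\op{\disc(\catl{W})}}$ (equivalently the associated-category functor $E$) preserves finite limits, adjunctions, and discrete objects (Lemma~\ref{l:preservation}), hence carries the defining adjunction of Definition~\ref{d:internal:ccc} to the corresponding internal-CCC adjunction for the indexed category $\mathit{fam}_F(A)$; one then invokes Example~\ref{e:indexed:ccc}, which identifies internal cartesian closedness of a split indexed category with fibrewise cartesian closedness. Your argument is essentially the fibre-by-fibre unpacking of that transport, done by hand without building the associated-category machinery. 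What the paper's approach buys is uniformity: it shows at once that the whole family fibration is cartesian closed (so reindexing along every $\mor{s}{X}{Y}$ in $\disc(\catl{W})$ preserves the closed structure), and it sits inside a framework that also yields the converse when discrete objects are dense. What your approach buys is self-containment and transparency: it isolates precisely where each discreteness hypothesis is consumed, and needs no inserter or smallness arguments.
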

Therefore, an internally cartesian closed and cocartesian object $A \in \catl{W}$ for every discrete object $X \in \disc(\catl{W})$ gives a system of rules:
\begin{center}
\begin{tabular}{l@{\hskip 0.05in}l}
%\multicolumn{2}{c}{Meta rules}
%\\[0.2in]
\begin{tabular}{c}
\\
\hline
$\tau_X\!\!\!\!\overset{\mathit{id}_{\tau_X}}{\vdash}\!\!\!\!\tau_X$ \\
\end{tabular} (id)
&
\begin{tabular}{c}
$\tau_X \overset{f}{\vdash} \sigma_X \;\; \sigma_X\overset{g}{\vdash} \rho_X$ \\
\hline
$\tau_X \overset{g \circ f}{\vdash} \rho_X$ \\
\end{tabular} (com)
\\[0.3in]
%\end{tabular}
%\begin{tabular}{c@{\hskip 0.1in}c}
%\multicolumn{2}{c}{Lambda rules}
%\\[0.2in]
\begin{tabular}{c}
$x \in 1$\\
\hline
$\tau_X \overset{!}{\vdash} 1$ \\
\end{tabular} ($1$-int)
&
\begin{tabular}{c}
$x \in 1$\\
\hline
$0 \overset{*}{\vdash} \tau_X$ \\
\end{tabular} ($0$-int)
\\[0.3in]
\begin{tabular}{c}
$\rho_X \overset{f}{\vdash} \tau_X \;\; \rho_X\overset{g}{\vdash} \sigma_X$\\
\hline
$\rho_X\!\!\overset{\langle f, g\rangle}{\vdash}\!\!\tau_X \times \sigma_X$ \\
\end{tabular} ($\times$-int)
&
\begin{tabular}{c}
$\rho_X \overset{f}{\vdash} \tau_X \times \sigma_X$ \\
\hline
$\rho_X\!\!\!\!\!\!\overset{\pi_{\tau_X} \circ f}{\vdash}\!\!\!\!\!\!\tau_X \;\; \rho_X\!\!\!\!\!\!\overset{\pi_{\sigma_X} \circ f}{\vdash}\!\!\!\!\!\!\sigma_X$\\
\end{tabular} ($\times$-eli)
\\[0.3in]

\begin{tabular}{c}
$\tau_X \overset{f}{\vdash} \rho_X \;\; \sigma_X\overset{g}{\vdash} \rho_X$\\
\hline
$\tau_X \sqcup \sigma_X \!\!\overset{\lbrack f, g\rbrack}{\vdash}\!\!\rho_X$ \\
\end{tabular} ($\sqcup$-int)
&
\begin{tabular}{c}
$\tau_X \sqcup \sigma_X \overset{f}{\vdash}\rho_X$ \\
\hline
$\tau_X\!\!\!\!\!\!\overset{f \circ \iota_{\tau_X}}{\vdash}\!\!\!\!\!\!\rho_X \;\; \sigma_X\!\!\!\!\!\!\overset{f \circ \iota_{\sigma_X}}{\vdash}\!\!\!\!\!\!\rho_X$\\
\end{tabular} ($\sqcup$-eli)
\\[0.3in]

\begin{tabular}{c}
$\tau_X \times \sigma_X \overset{f}{\vdash} \rho_X$\\
\hline
$\tau_X\!\!\!\!\overset{\lambda y:\sigma_X f(-, y)}{\vdash}\!\!\!\!{\rho_X}^{\sigma_X}$ \\
\end{tabular} ($\lambda$-int)
&
\begin{tabular}{c}
$\tau_X\overset{f}{\vdash} {\rho_X}^{\sigma_X}$ \\
\hline
$\tau_X \times \sigma_X\!\!\!\!\overset{f(-)\cdot(=)}{\vdash}\!\!\!\!\rho_X$\\
\end{tabular} ($\lambda$-eli)
\\[0.3in]
\end{tabular}
\end{center}
which by Lambek-Curry-Howard isomorphism rises to a simply typed lambda calculus. 

More generally, given any morphism $\mor{r}{A \times A}{A}$, we shall say that an object $A$ is \emph{internally} left (resp.~right) $r$-closed if the morphism:\newline
${A \times |A| \to^{\langle r \circ (\mathit{id} \times \epsilon_A), \pi_{|A|}\rangle} A \times |A|}$ (resp.~${|A| \times A \to^{\langle r \circ (\epsilon_A \times \mathit{id}), \pi_{|A|}\rangle} A \times |A|}$) has a right adjoint. Following the terminology of Bourbaki we shall call an object $A$ together with a morphism $\mor{r}{A \times A}{A}$ a ``magma'', and internally $r$-left and $r$-right closed object a ``(bi)closed magma''. 
\begin{example}[Monoidal closed structure]\label{e:monoidal:cc}
A monoidal structure $\langle I, \otimes \rangle$ on a category $\catl{C}$ is left (resp. right) closed in the usual sense if it is internally left (resp. right) $\otimes$-closed.
\end{example}
\begin{example}[Lambek category]\label{e:lambek:cat}
Let us recall that a Lambek category is a category $\catl{C}$ together with a functor $\mor{R}{\catl{C} \times \catl{C}}{\catl{C}}$ such that for every object $A \in \catl{C}$ both $R(A, -)$ and $R(-, A)$ have right adjoints. A Lambek category is precisely a category which is an internally left and right $R$-closed magma.
\end{example}
We can go a bit further and define $r$-closedness in a general monoidal 2-category.
\begin{definition}[Internally closed connectives within a monoidal 2-category]\label{d:internal:mon:ccc}
Let $\catl{W}$ be a monoidal 2-category with a notion of discreteness such that its category of discrete objects $\disc(\catl{W})$ is cartesian and the embedding $\mor{F}{\disc(\catl{W})}{\catl{W}}$ is op-lax monoidal. An object $A \in \catl{W}$ together with a morphism $\mor{r}{A \otimes A}{A}$ is \emph{internally} left $r$-closed if:
$$A \otimes F(|A|) \to^{\!\!\!\id{} \otimes \theta \circ F(\Delta_{|A|})\!\!\!} A \otimes F(|A|) \otimes F(|A|) \to^{\!\!\!\id{} \otimes \epsilon \otimes \id{}\!\!\!} A \otimes A \otimes F(|A|) \to^{\!\!\!r \otimes \id{}\!\!\!} A \otimes F(|A|)$$
has a right adjoint, where $\mor{\epsilon}{F(|A|)}{A}$ is the counit of the adjunction that gives the notion of discreteness on $\catl{W}$, $\mor{\theta}{F(|A| \times |A|)}{F(|A|) \otimes F(|A|)}$ is the structure morphism from the definition of op-lax monoidal functor, and the natural isomorphisms expressing associativity of the tensor product $\otimes$ have been omitted for clarity.
Similarly, object $A$ is right $r$-closed if the morphism:
$$F(|A|) \otimes A \to^{\!\!\!\theta \circ F(\Delta_{|A|}) \otimes \id{} \!\!\!} F(|A|) \otimes F(|A|) \otimes A \to^{\!\!\!\id{} \otimes \epsilon \otimes \id{}\!\!\!} F(|A|) \otimes A \otimes A \to^{\!\!\!\id{} \otimes r\!\!\!} F(|A|) \otimes A$$
has right adjoint.
\end{definition}
\begin{example}[Enriched categories]\label{e:connectives:enriched}
Let $\catl{C}$ be a monoidal category with initial object $0$ preserved by the tensor product, $\mor{|{-}|}{\onecat(\catl{C})}{\catw{Set}}$ be the discretisation functor for the 2-category $\onecat(\catl{C})$ of $\catl{C}$-enriched categories, and $\mor{F}{\catw{Set}}{\onecat(\catl{C})}$ its left adjoint.   

Consider a $\catl{C}$-enriched category $\catl{A}$ together with a $\catl{C}$-enriched functor $\mor{R}{\catl{A} \otimes \catl{A}}{\catl{A}}$. The functor $A \otimes F(|A|) \rightarrow A \otimes F(|A|)$ from the definition of left $R$-closedness is given by:
$$\tuple{A, X} \mapsto \tuple{A, X, X} \mapsto \tuple{R(A, X), X}$$
By the definition of the tensor product for enriched categories:
$$\hom_{\catl{A}\otimes F(|\catl{A}|)}(\tuple{R(A, X), X}, \tuple{B, Y}) = \hom_{\catl{A}}(R(A, X), B) \otimes \hom_{F(|\catl{A}|)}(X, Y)$$
Let us assume that for every $X \in F(|\catl{A}|)$ the functor $R(-, X)$ has right adjoint $X \multimap (-)$. We claim that ${\tuple{B, Y} \mapsto \tuple{Y \multimap B, Y}}$ is right adjoint to ${\tuple{A, X} \mapsto \tuple{R(A, X), X}}$. Using again the definition of the tensor product of categories:
$$\hom_{\catl{A}\otimes F(|\catl{A}|)}(\tuple{A, X}, \tuple{Y \multimap B, Y}) = \hom_{\catl{A}}(A, Y \multimap B) \otimes \hom_{F(|\catl{A}|)}(X, Y)$$
Therefore we have to show:
$$\hom_{\catl{A}}(R(A, X), B) \otimes \hom_{F(|\catl{A}|)}(X, Y) \approx \hom_{\catl{A}}(A, Y \multimap B) \otimes \hom_{F(|\catl{A}|)}(X, Y)$$
Because $F(|\catl{A}|)$ is discrete, we can argue by cases. If $X \neq Y$, then by discreteness $\hom(X, Y) = 0$, and by preservation of initial object by the tensor:
$$\hom_{\catl{A}}(R(A, X), B) \otimes 0 \approx 0 \approx \hom_{\catl{A}}(A, Y \multimap B) \otimes 0$$
On the other hand, if $X=Y$, then the situation reduces to the adjunction between $R(-, X)$ and $X \multimap (-)$.
%If  then the above functor has right adjoint $\tuple{B, Y} \mapsto \tuple{Y \multimap B, Y}$.
%$$\hom_{\catl{A}\otimes F(|\catl{A}|)}(\tuple{A, X}, \tuple{Y \multimap B, Y} = \hom_{\catl{A}}(A, Y \multimap B) \otimes \hom_{F(|\catl{A}|)}(X, Y)$$
Hence, if $\catl{A}$ is left $R$-closed in the usual sense, it is left $R$-closed in the sense of Definition \ref{d:internal:mon:ccc}. To see that the converse is true as well, it suffices put $Y = X$ in the above formula.
A symmetric argument shows that $\catl{A}$ is right $R$-closed iff for every ${X \in F(|\catl{A}|)}$ the functor $R(X, -)$ has right adjoint.
\end{example} 
In case $\catl{W}$ is a cartesian 2-category and $\mor{r}{A \times A}{A}$ is the diagonal morphism, by universal properties of products, Definition \ref{d:internal:ccc} coincides with Definition \ref{d:internal:mon:ccc}.
\begin{example}[Topological spaces]\label{e:top:closed}
Although category of topological spaces is not cartesian closed, very many interesting topological spaces are exponentiable. In fact for a topological space $A$ there exists right adjoint to $\mor{- \times A}{\catw{Top}}{\catw{Top}}$ if and only if $A$ is a core-compact space \cite{isbellspace}, which means that the underlying locale of its open sets is continuous. One then may think that a restriction to the subcategory of topological spaces consisting of core-compact spaces could work. However, this again is not the case, because an exponent of two core-compact spaces need not be core-compact\footnote{An example of a subcategory of topological spaces that is cartesian closed is the category of compactly generated topological spaces \cite{top} \cite{ccspace}.}.
This example shows that sometimes we need even more general notion of internal closedness of one object with respect to another object. Formally, we shall say that given any morphisms $\mor{j}{B}{A}$ and $\mor{r}{A \times A}{A}$, an object $A$ is internally left (resp. right) $r$-closed with respect to ``the inclusion'' $j$ if:
$A \times |B| \to^{\langle r \circ (\mathit{id} \times j \circ \epsilon_B), \pi_{|B|}\rangle} A \times |B|$ (resp.~${|B| \times A \to^{\langle r \circ (j \circ \epsilon_B \times \mathit{id}), \pi_{|B|}\rangle} A \times |B|}$) has a right adjoint. According to this definition $\catw{Top}$ is cartesian closed with respect to the subcategory of core-compact spaces. 
\end{example}

We shall extend our lambda calculi by a notion of polymorphism. 
\begin{definition}[Parametrised (co)products]\label{d:param:prod}
Let $\catl{W}$ be a 2-category. Consider an object $A \in \catl{W}$, and a morphism $\mor{s}{X}{Y} \in \catl{W}$. A parametrised element $\tau_X \in A$ has a (co)product along $s$ if the right (resp.~left) Kan extension $\prod_s \tau_X$ (resp.~$\coprod_s \tau_X$) of $\tau_X$ along $s$ exists. That is, there is a morphism ${\mor{\prod_s \tau_X}{Y}{A}}$ (resp.~${\coprod_s \tau_X}$) and natural in $\mor{h}{Y}{A}$ bijections ${\hom(h, \prod_s \tau_X) \approx \hom(h \circ s, \tau_X)}$ (resp.~${\hom(\coprod_s \tau_X, h) \approx \hom(\tau_X, h \circ s)}$).

Moreover, we call the (co)product stable if the Kan extension is pointwise, meaning that the Kan extension is stable under comma objects. That is, for any diagram with a comma object square:
$$\bfig
\square(0, 0)<500,400>[\comma{i}{s}`X`I`Y;\pi_1`\pi_2`s`i]
\place(280, 200)[\twoar(1, 1)\scriptstyle{\pi}]

\place(680, 200)[\twoar(1, 1)\scriptstyle{\epsilon}]
\ptriangle(500, 0)/->``<-/<800, 400>[X`A`Y;\tau_X``\prod_s \tau_X ]
\efig$$
the composition: $$\epsilon \circ \pi_1 \bullet (\prod_s \tau_X) \circ \pi$$ exhibits $(\prod_s \tau_X) \circ i$ as the right Kan extension of $\tau_X \circ \pi_1$ along $\pi_2$; and dually, for any diagram with a comma object square:
$$\bfig
\square(0, 0)<500,400>[\comma{s}{i}`X`I`Y;\pi_1`\pi_2`s`i]
\place(250, 250)[\twoar(-1, -1)\scriptstyle{\;\pi}]
\place(680, 250)[\twoar(-1, -1)\scriptstyle{\;\eta}]
\ptriangle(500, 0)/->``<-/<800, 400>[X`A`Y;\tau_X``\coprod_s \tau_X]
\efig$$
the composition: $$(\coprod_s \tau_X) \circ \pi \bullet \eta \circ \pi_1$$ exhibits $(\coprod_s \tau_X) \circ i$ as the left Kan extension of $\tau_X \circ \pi_1$ along $\pi_2$.      
\end{definition}

\begin{example}[Internal (co)products]\label{e:prod:via:param}
Let $\catl{W}$ be a finitely complete 2-category with coproducts and $A \in \catl{W}$ an object with internal (co)products. Then for every object $X \in \catl{W}$ and every pair of parametrised elements $\tau_X, \sigma_X \in A$ the parametrised stable (co)product of cotuple $[\tau_X, \sigma_X]$ along the codiagonal $\mor{\nabla}{X \sqcup X}{X}$ exists
$$\bfig
\node xx(0, 1000)[X \sqcup X]
\node x(0, 700)[X]
\node a(900, 1000)[A]

\arrow/->/[xx`a;\lbrack\tau_X, \sigma_X \rbrack]
\arrow/->/[xx`x;\nabla]
\arrow|r|/->/[x`a;]
\place(150, 850)[\twoar(1, 1)]
\place(300, 900)[(\twoar(-1, -1))]
\place(500, 750)[\scriptstyle{\prod_\nabla \lbrack \tau_X, \sigma_X\rbrack}]
\place(500, 650)[\scriptstyle{(\coprod_\nabla \lbrack \tau_X, \sigma_X\rbrack)}]
\efig$$
and is equal to the internal (co)product $\tau_X \iprod_A \sigma_X$ (resp. $\tau_X \icoprod_A \sigma_X$). Indeed, by definition of Kan extensions we are looking for adjoint to:
$$\hom(X, A) \to^{(-) \circ \nabla} \hom(X \sqcup X, A) \approx \hom(X, A) \times \hom(X, A)$$
However by the universal property of an adjunction this morphism is isomorphic to the diagonal functor:
$$\hom(X, A) \to^\Delta \hom(X, A) \times \hom(X, A)$$
which by the usual 2-Yoneda argument has right (resp. left) adjoint since ${A \to^\Delta A \times A}$ does.
\end{example}
Let us elaborate on the stability condition. Given a diagram like in Definition \ref{d:param:prod}, we extend it by taking generalised elements $i_I \in Y, j_I \in X$ together with a generalised arrow $i_I \to^k  s(j_I)$, and form a comma object:
$$\bfig
\pullback(0, 0)[\comma{i_I}{s}`X`I`Y;\pi_1`\pi_2`s`i_I]/{@{>}@/^1em/}`-->`{@{>}@/_1em/}/[I;j_I``\mathit{id}]
\place(250, 250)[\twoar(1, 1)\scriptstyle{\pi}]
\place(700, 300)[\twoar(1, 1)\scriptstyle{\epsilon}]
\place(0, 690)[\twoar(1, 1)\scriptstyle{k}]
\ptriangle(500, 0)/->``<-/<500, 500>[X`A`Y;\tau_X``\prod_s \tau_X]
\efig$$
The stability condition tells us that we may define the product $\prod_s \tau_X $, which is a $Y$-indexed family, on each index $i_I \in Y$ separately by multiplying over generalised arrows $i_I \to  s(j_I)$, that is:
$$\{\prod_s \tau_X\}_{i_I} = \prod_{i_I \rightarrow  s(j_I)} \{\tau_{X}\}_{j_I}$$
In case $Y$ is canonically discrete, every line shrinks to a point, the comma object turns into pullback, and the above formula simplifies to:
$$\{\prod_s \tau_X\}_{i_I} = \prod_{i_I = s(j_I)} \{\tau_X\}_{j_I}$$
In the rest of the paper we shall mostly restrict to (co)products parametrised by discrete objects (restricting also the stability condition in Definition \ref{d:param:prod} to the subcategory of discrete objects), and call the (co)products polymorphic objects. Such polymorphism induces two additional rules for products:
\begin{center}
\begin{tabular}{l@{\hskip 0.1in}l}
\begin{tabular}{c}
$\left\{\sigma_{s(j)} \overset{f_{j}}{\mvdash} \tau_{j}\right\}_{j \in Y}$
\\
\hline
$\left\{\sigma_i\overset{\langle f_j \rangle_{i = s(j)}}{\mvdash}\underset{i = s(j)}{\prod}\tau_{j}\right\}_{i \in X}$
\\[0.2in]
($\prod$-int)\\
\end{tabular}
&
\begin{tabular}{c}
$\left\{\sigma_i \overset{f_i}{\mvdash}\underset{i = s(j)}{\prod}\tau_{j}\right\}_{i \in X}$ \\
\hline
$\left\{\sigma_{s(j)} \overset{\pi_j \circ f_{s(j)}}{\mvdash} \tau_{j}\right\}_{j \in Y}$\\[0.2in]
($\prod$-eli) \\
\end{tabular}
\end{tabular}
\end{center}
and dual for coproducts.
It is easiest to grasp the rules by the following example.
\begin{example}[Polymorphism in $\onecat$]\label{e:poly:cat}
Let $\onecat$ be the 2-category of locally small categories. Consider two sets $X, Y$ interpreted as categories in $\onecat$. A functor $\mor{F}{X \times Y}{\catl{C}}$ may be thought of as an $X, Y$-indexed family $\{\tau_{i, j}\}_{i \in X, j \in Y}$ of objects $\tau_{i, j} \in \catl{C}$, where $\tau_{i, j} = F(i, j)$. If $\catl{C}$ has $Y$-indexed products (in the usual sense), then with every such family, we may associate an $X$-indexed family $\{\prod_{j \in Y}\tau_{i,j}\}_{i \in X}$. Furthermore, this family satisfies the following universal property: for every $X$-indexed collection $\{\sigma_i\}_{i \in X}$ from $\catl{C}$ and every $X, Y$-indexed collection $\{\mor{f_{i, j}}{\sigma_i}{\tau_{i, j}}\}_{i\in X, j\in Y}$ of morphism from $\catl{C}$ there exists a unique collection of $X$-indexed morphisms $\{\mor{h_{i}}{\sigma_i}{\prod_{j\in Y}\tau_{i, j}}\}_{i\in X}$ from $\catl{C}$  such that $\pi_j^i \circ h_i = f_{i, j}$, where $\mor{\pi_j^i}{\prod_{j\in Y}\tau_{i, j}}{\tau_{i, j}}$ is the $j$-th projection of $i$-th element of the family. When $X$ is the singleton, the above reduces to ``internalisation'' of an external (that is set-indexed) collection of objects (types) $\{\tau_j\}_{j \in Y}$ into a single product object (type) $\prod_{j \in Y} \tau_j$.

In the above case, the product is taken along the cartesian projection $\mor{\pi_X}{X \times Y}{X}$. More generally, we may form a product along any function $\mor{s}{Z}{X}$ --- it assigns to a $Z$-indexed collection $\{\tau_{j}\}_{j \in Z}$ of objects $\tau_j \in \catl{C}$ an $X$-indexed collection $\{\prod_{i = s(j)}\tau_{j}\}_{i \in X}$.
\end{example}
The example shows that polymorphism in $\onecat$ is really an ``ad hoc polymorphism''. This is because every discrete category $\catl{X}$ is isomorphic to the coproduct over terminal category $\coprod_{|\catl{X}|}1$, and every morphism between discrete categories is induced by a function between indexes of the coproducts. Generally, we shall call such polymorphism ``ad hoc'' to stress the fact, that we are able to freely chose every element of the collection by choosing a generalised element on each of its components. It is better perhaps to think of $\coprod_\lambda \catl{A}$ as tensor of $\catl{A}$ with a discrete category $\lambda$. Here, we shall recall the notion of tensor in an arbitrary 2-category.
\begin{definition}[(co)Tensor]\label{d:tensor}
Let $\catl{W}$ be a 2-category, $A$ an object in $\catl{W}$, and $\lambda$ an ordinary small category. The tensor of $A$ with $\lambda$ exists, and is denoted by $\lambda \otimes  A$, if there exists a 2-natural isomorphism of 2-functors:
$$\hom_{\onecat}(\lambda, \hom_\catl{W}(A, -)) \approx \hom_{\catl{W}}(\lambda \otimes A, -)$$
Dually, the cotensor of $A$ with $\lambda$ exists, and is denoted by $\lambda \pitchfork A$, if there exists a 2-natural isomorphism of 2-functors:
$$\hom_{\onecat}(\lambda, \hom_\catl{W}(-, A)) \approx \hom_\catl{W}(-, \lambda \pitchfork A)$$
\end{definition}
If $\lambda$ is a set thought of as a discrete category, then the notion of tensor with $\lambda$ coincides with the coproduct over $\lambda$ --- clearly by the definition of a coproduct $\hom(\coprod_\lambda A, -) \approx \hom(\lambda, \hom(A, -))$ therefore $\lambda \otimes A \approx \coprod_\lambda A$. The usual codiagonal morphism $\mor{\nabla}{\coprod_\lambda A}{A}$ is the projection morphism $\mor{\pi}{\lambda \otimes A}{A}$ obtained via the transposition of the functor $\lambda \rightarrow \hom(A, A)$ sending everything from $\lambda$ to the identity on $A$. There is also a diagonal functor $\mor{\Delta}{\lambda}{\hom(A, \lambda \otimes A)}$ given by the transposition of the identity functor $\mor{\id{\lambda \otimes A}}{\lambda \otimes A}{\lambda \otimes A}$. Then every function between indexes $\mor{s}{\lambda'}{\lambda}$ induces a reindexing morphism $\mor{s \otimes A}{\lambda' \otimes A}{\lambda \otimes A}$, which is the transposition of $\mor{\Delta \circ s}{\lambda'}{\hom(A, \lambda \otimes A)}$. An ad hoc polymorphism is a polymorphism along such reindexing morphisms.  
\begin{definition}[Ad hoc polymorphism]\label{d:adhoc}
Let $A, X \in \catl{W}$ be two objects in a 2-category, and assume that the tensors $\lambda \otimes X$ and $\lambda' \otimes X$ with sets $\lambda$ and $\lambda'$ exist. An ad hoc $\lambda' \otimes X$-parametrised family $\mor{\tau}{\lambda' \otimes X}{A}$ has an ad hoc (co)product along a function $\mor{s}{\lambda'}{\lambda}$ if the parametrised (co)product of $\tau$ along the reindexing morphism $\mor{s \otimes X}{\lambda' \otimes X}{\lambda \otimes X}$ exists. In case the (co)product is taken over cartesian projection $\lambda \times \lambda' \rightarrow \lambda$ we write $\prod_{i \in \lambda'}\tau_i$ (resp. $\coprod_{i \in \lambda'}\tau_i$) for the ad hoc (co)product and call it ``simple (co)product''.
\end{definition}
The next example shows that in other 2-categories, other variants of polymorphisms are possible.
% may be a bit misleading at first (see example \ref{e:adhoc:per}). However, we choose this name to stress the fact, that it is about external --- that is indexed by arbitrary discrete objects --- families, compared to internal --- that is indexed by the internal structure of the object itself --- families.
\begin{example}[Polymorphism in  $\cat(\omega\catw{Set})$]\label{e:omega:set}
Let $\omega\catw{Set}$ be the category whose objects are sets $X$ of pairs $\langle x, n \rangle$, where $n$ is a natural number, and whose morphisms $\mor{f}{X}{Y}$ are functions $\mor{f}{\pi_1[X]}{\pi_1[Y]}$ such that there exists a partially recursive function $e$ with the property: if $\tuple{x, n} \in X$ then ${\tuple{f(x), e(n)} \in Y}$. One may think of $\omega$-sets as of sets enhanced by ``proofs'' of the fact that elements belong to the set. Then a function between $\omega$-sets has to computably translate the proofs. In the above notation $\pi_1[-]$ is really a functor $\omega\catw{Set} \rightarrow \catw{Set}$ forgetting the proofs. Furthermore, it has right adjoint $\mor{F}{\catw{Set}}{\omega\catw{Set}}$ assigning to a set $X$ the $\omega$-set $\{\tuple{x, n} \colon x \in X, n \in N\}$, which means ``everything is a proof that an element belongs to the set for those elements that belong to the set'',  and making $\catw{Set}$ a reflective subcategory of $\omega\catw{Set}$. The category of $\omega$-sets has finite limits, therefore we may define the 2-category $\cat(\omega\catw{Set})$ of categories internal to $\omega\catw{Set}$.
%Let $\tau_X, \sigma_X \in \catl{A}$ be $\omega$-set parametrised collections of objects of $\omega$-category $\catl{A}$. We shall see how the category of partial equivalence relations may be thought of as internal to $\omega\catw{Set}$.
%
We start with a definition of an ordinary category $\catw{PER}$ --- its objects are partial equivalence relations on the set of natural numbers, and its morphisms $\mor{f}{A}{B}$ from a PER $A$ to a PER $B$ are functions $\mor{f}{N/A}{N/B}$ between quotients of the relations, for which there exist partially recursive functions $e$ on natural numbers satisfying $f([a]_A) = [e(a)]_B$. One may think of category $\catw{PER}$ as realisation of Reynold's system $R$ \cite{rsys}\cite{plotkin}. A PER $A$ corresponds to a ``type''. Two elements $a, a'$ are ``the same'' from the perspective of type $A$ if $a A a'$, and an element $a$ belongs to type $A$ if $A$ recognises it, that is, if $a A a$. A function from a type $A$ to a type $B$ is thus a function between elements that maps ``the same'' elements to ``the same'' elements. We shall see that $\catw{PER}$ has also a natural $\omega$-set structure. First, let us observe that $\catw{PER}$ is cartesian closed --- a product of two PER's $A$ and $B$ is given by:
$$x (A\times B) y \Leftrightarrow \pi_1(x) A \pi_1(y) \wedge \pi_2(x) B \pi_2(y)$$
where $\mor{\pi_1, \pi_2}{(N \times N \approx N)}{N}$ are some chosen partially recursive projections, and the exponent is given by:
$$e B^A r \Leftrightarrow \forall_{a, a'} a A a' \Rightarrow e(a) B r(a')$$
under some chosen partially recursive enumeration of partially recursive functions. Therefore, $\catw{PER}$ may be thought of as a category enriched over itself. Then, observe that $\catw{PER}$ is a reflective subcategory of $\omega\catw{Set}$ --- the embedding $\catw{PER} \rightarrow \omega\catw{Set}$ sends a PER $A$ to the $\omega$-set of quotients:
$$\{\tuple{[n]_A, n} \colon n A n\}$$
and its left adjoint identifies elements along their proofs --- it sends an $\omega$-set $X$ to the relation $\widehat{X}$:
$$n \widehat{X} m \Leftrightarrow \exists_{\tuple{x, n}, \tuple{x', m} \in X}\; x \cong x'$$ where two elements belong to the same equivalence class of equivalence relation $\cong$ if they share a common proof: that is, $\cong$ is generated by $x \cong x'$, such that $\tuple{x, e} \in X$ and $\tuple{x', e} \in X$ for some $e$. Therefore, $\catw{PER}$ may be thought of as a category enriched over $\omega\catw{Set}$. Finally, observe that we may glue $\hom$-$\omega$-sets of such enriched category into a single $\omega$-set:
$$\catw{PER}_1 = \{\tuple{\langle A, B, [n]_{B^A} \rangle, n} \colon A, B \;\textit{are PER's and $n B^A n$}\}$$
making $\catw{PER}$ an $\omega\catw{Set}$-internal category. Now, if $X$ is an ordinary set, then $\omega$-functors (i.e.~$\omega\catw{Set}$-internal functors) $\mor{\tau_X, \sigma_X}{X}{\catw{PER}}$ are ordinary families of PERs. However, an $\omega$-natural transformation (i.e.~$\omega\catw{Set}$-internal natural transformation) $\mor{\alpha}{\tau_X}{\sigma_X}$ has to satisfy a uniformity condition: $$\bigcap_{x \in X} \alpha(x) \neq \emptyset$$
%This condition is sufficient, because if $n \in \bigcap_{x \in X} \alpha(x)$, then we may form the constant on $n$ function, which is obviously recursive; and it is necessary
%Because for every $x \in X$, the set of proofs $E_x$ is the full set of natural numbers $N$, the function $\mor{\alpha}{X}{\catw{PER}_1}$
This means that $\mor{\alpha}{\tau_X}{\sigma_X}$ is determined by a single partially recursive function $\mor{e}{N}{N}$ such that for all $x \in X$ we have $a \tau_X(x) a' \Rightarrow e(a) \sigma_X(x) e(a')$. Therefore, the parametrised product of $\sigma_X$ is given by $\bigcap_{x \in X} \sigma_X(x)$:
$$\bfig
\node a(0, 1000)[\tau]
\node b(600, 700)[\sigma_X(x)]
\node prod(0, 700)[\bigcap_{x \in X} \sigma_X(x)]

\arrow/->/[a`b;\lbrack e \rbrack]
\arrow/->/[prod`b;\pi_x]
\arrow/-->/[a`prod;\exists ! \lbrack e \rbrack]
\efig$$
The projections $\bigcap_{x \in X} \sigma_X(x) \to^{\pi_x} \sigma_X(x)$ are induced by the identity function. For every constant $\omega$-functor $\mor{\tau}{X}{\catw{PER}}$, an $\omega$-natural transformation $\tau \rightarrow \sigma_X$ is induced by $e$ satisfying $\forall_{x \in X} a \tau a' \rightarrow e(a) \sigma_X(x) e(a')$. The last condition is equivalent to $a \tau a' \rightarrow e(a) (\bigcap_{x \in X}\sigma_X(x)) e(a')$. Therefore, every $\omega$-natural transformation $\tau \rightarrow \sigma_X$ uniquely determines a morphism $\tau \to \bigcap_{x \in X}\sigma_X(x)$. One may find that such products reassemble usual rules for intersection types in lambda calculi:
\begin{center}
\begin{tabular}{l@{\hskip 0.1in}l}
\begin{tabular}{c}
$\tau \overset{f}{\mvdash} \sigma_X$\\
\hline
$\tau \overset{f}{\mvdash} \bigcap_{x \in X} \sigma_X(x)$ \\
\end{tabular} ($\bigcap$-int)
&
\begin{tabular}{c}
$\tau \overset{f}{\mvdash} \bigcap_{x \in X} \sigma_X(x)$ \\
\hline
$\tau \overset{f}{\mvdash} \sigma_X$\\
\end{tabular} ($\bigcap$-eli)
\end{tabular}
\end{center}
By similar considerations, we get that the parametrised coproduct of $\sigma_X$ is $\bigcup_{x \in X} \sigma_X(x)$. An extension of Example \ref{e:prod:via:param} shows that internal (finite) (co)products may be obtained by using tensors $X \otimes 1$ in parametrisation instead of $X$. There is also an intermediate construction between $X$ and $X \otimes 1$ that yields uniform quantifiers. We may reach this construction by parameterising a category via the internal natural number object ${N_\omega = \{\langle n, n\rangle \colon n \in N\}}$ in $\omega\catw{Set}$. An  $N_\omega$-parametrised collection of objects from $\catw{PER}$ is any countable collection $\sigma(n)_{n \in N}$ of PER's. A product $\prod_{n \in N} \sigma_X(n)$, which in this context may be denoted by $\forall_{n \in N} \sigma_X(n)$, consists of partially recursive functions $e$ which applied to the $n$-th index return an element of $\sigma_X(n)$, that is: $e (\forall_{n \in N} \sigma_X(n)) r \Leftrightarrow \forall_{n \in N} \; e(n) \sigma_X(n) r(n)$. It should be noted that the last construction reduces to the usual dependent product in the ordinary category $\catw{PER}$ since the internal natural number object in $\catw{PER}$ is the same as the internal number object in $\omega\catw{Set}$.
%
%
%
%A natural transformation between such collections is an $X$-parametrised collection of morphisms $\mor{f}{X}{\catl{A}_1}$, such that for each $x \in X$ the morphism $f(x)$ has type $\tau_{x} \rightarrow \sigma_{x}$. The parametrised product of $\sigma_X$ along the terminal functor $\mor{!}{X}{1}$ is an object $\prod \sigma_X$ satisfying the usual universal property of the product with the exception that the collection and any family of morphisms $\tau \rightarrow \sigma_x$ have to translate proofs from $X$.   
% We have seen in Example \ref{e:prod:via:param} that products parametrised by $1 \sqcup 1$ along the codiagonal $\mor{\nabla}{1 \sqcup 1}{1}$ correspond to the internal cartesian structure in $A$. So, let us consider a discrete $\omega$-category build on $F(1 \sqcup 1)$. An $\omega$-functor $\mor{\tau}{F(1\sqcup 1)}{A}$, chooses two objects $\tau_1, \tau_2$ from an $\omega$-category $A$. However, it cannot chose an arbitrary pair of objects --- because every natural number is a proof that both elements of $F(1\sqcup 1)$ belongs to the $\omega$-set, a partially recursive function that translates the proofs, has to map each $n \in N$ to $m \in E_{\tau_1} \cap E_{\tau_2}$, therefore $E_{\tau_1} \cap E_{\tau_2}$ cannot be empty.
%
%Since $\catw{Set}$ is a reflective subcategory of $\omega\catw{Set}$ these functors induce a functor $\mor{\lbrack \tau, \sigma \rbrack}{F(1 \sqcup 1)}{A}$
\end{example}
If $\mor{\tau}{X}{C}$ is an $X$-parametrised element of $C$, then one may try to compute the parametrised product of $\tau$ along \emph{itself}:
$$\bfig
\node x(0, 1000)[X]
\node c1(600, 1000)[C]
\node c2(0, 700)[C]

\arrow/->/[x`c1;\tau]
\arrow/->/[x`c2;\tau]
\arrow|r|/->/[c2`c1;\prod_{\tau} \tau]

\place(230, 860)[\twoar(1, 1)]
\efig$$
\begin{definition}[(Density (co)product]\label{d:density:product}
A density (co)product $\mor{T_\tau}{C}{C}$ (resp. $\mor{D_\tau}{C}{C}$) of a parametrised element $\mor{\tau}{X}{C}$ is the (co)product of $\mor{\tau}{X}{C}$ along itself.
\end{definition} 
\begin{example}[Logical consequence]\label{e:logical:consequence:codensity}
Let $\onecat(2)$ be the 2-category of categories enriched in a 2-valued Boolean algebra $2 = \{0 \rightarrow 1\}$. A $2$-enriched category is tantamount to a partially ordered set (poset), and a $2$-enriched functor is essentially a monotonic function between posets. Let us consider a relation:
$${\models} \subseteq {\word{Mod} \times \word{Sen}}$$
thought of as a satisfaction relation between a set of models $\word{Mod}$ and a set of sentences $\word{Sen}$. By transposition, relation $\models$ yields the ``theory'' function $\mor{\word{th}}{\word{Mod}}{2^\word{Sen}}$, where $2^\word{Sen}$ is the poset of functions $\word{Sen} \rightarrow 2$, or equivalently the poset of subsets of $\word{Sen}$ .

Since ``power'' posets $2^\word{Sen}$ are internally complete in the 2-category $\onecat(2)$, the stable density product of $\mor{\word{th}}{\word{Mod}}{2^\word{Sen}}$ exists:
$$\bfig
\node x(0, 1000)[\word{Mod}]
\node c1(600, 1000)[2^\word{Sen}]
\node c2(0, 700)[2^\word{Sen}]

\arrow/->/[x`c1;\word{th}]
\arrow/->/[x`c2;\word{th}]
\arrow|r|/->/[c2`c1;T_\word{th}]
\place(230, 860)[\twoar(1, 1)]
\efig$$
and is given by the 2-enriched end:
$$T_\word{th}(\Gamma)(\psi) = \int_{M\in \mathit{Mod}} \word{th}(M)(\psi)^{\hom(\Gamma, \word{th}(M)(-))}$$
where $\psi \in \word{Sen}$ is a sentence, and $\Gamma \in 2^\word{Sen}$ is a set of sentences. We are interested in values of $T_\word{th}$ on representable functors (i.e.~single sentences) $\hom_\word{Sen}(-, \phi)$:
\begin{eqnarray*}
T_\word{th}(\hom_\word{Sen}(-, \phi))(\psi) &=& \int_{M\in \mathit{Mod}} \word{th}(M)(\psi)^{\hom(\hom_\word{Sen}(-, \phi), \word{th}(M)(-))}\\
&\approx& \int_{M\in \mathit{Mod}} \word{th}(M)(\psi)^{\word{th}(M)(\phi)}
\end{eqnarray*}
where the isomorphism follows from the Yoneda reduction. Observe that the exponent $\word{th}(M)(\psi)^{\word{th}(M)(\phi)}$ in a $2$-enriched world may be expressed by the implication ``${\word{th}(M)(\phi)} \Rightarrow \word{th}(M)(\psi)$'', or just ``${M \models \phi} \Rightarrow {M \models \psi}$'', where every component of the implication is interpreted as a logical value in the $2$-valued Boolean algebra. Furthermore, ends turn into universal quantifiers, when we move to $2$-enriched world. So, the end $\int_{M \in \mathit{Mod}} \word{th}(M)(\psi)^{\word{th}(M)(\phi)}$ is equivalent to the meta formula ``$\forall_{M\in \word{Mod}} \left({M \models \phi} \Rightarrow {M \models \psi}\right)$'', which is just the definition of logical consequence:
$$ \phi \models_\word{Sen} \psi \;\;\;\mathit{iff}\;\;\; \forall_{M\in \word{Mod}} \left({M \models \phi} \Rightarrow {M \models \psi}\right)$$
The general case, where $\Gamma$ is not necessarily representable, is similar:
$$T_\word{th}(\Gamma)(\psi)  \;\;\;\mathit{iff}\;\;\; \forall_{M\in \word{Mod}} \left(\left(\forall_{\phi \in \Gamma} M \models \phi\right) \Rightarrow {M \models \psi}\right)$$
Therefore, the density product of a satisfaction relation reassembles the semantic consequence relation. 
\end{example}
A density product morphism $T_\tau = \prod_{\tau} \tau$, if exists, is always a part of a monad structure. The unit $\mor{\eta}{\id{C}}{T_\tau}$ is the unique 2-morphism to the product induced by the identity $\mor{\id{\tau}}{\tau}{\tau}$; similarly the multiplication $\mor{\mu}{T_\tau \circ T_\tau}{T_\tau}$ is given as the unique 2-morphism to the product induced by $\epsilon \bullet T_\tau \circ \epsilon$, where $\mor{\epsilon}{T_\tau \circ \tau}{\tau}$ is the product's 2-morphism. By duality, a coproduct morphism $D_\tau = \coprod_{\tau} \tau$, provided it exists, is always a part of a comonad structure. In case of functors between ordinary categories the density coproduct is known as density comonad, and density product is sometimes called a ``codensity monad''. The terminology comes from the fact that a functor $\mor{F}{\catl{A}}{\catl{B}}$ between categories $\catl{A}$ and $\catl{B}$ is dense iff the identity on $\catl{B}$ is the parametrised coproduct of $F$ with itself. In a sense the density comonad on a functor exhibits the ``defect'' of the functor to be dense.
\section{Internal incompleteness theorem}
\label{s:incomplete}
The classical result of Freyd shows that categories that are both small and complete are preorders. Let us recall the argument. If $\catl{C}$ is a small category, then there exists a set of all morphisms of $\catl{C}$ with cardinality $\lambda$. Let us assume that there is a pair of distinct parallel morphisms $\mor{f,g}{A}{B}$ in $\catl{C}$. We may form a product of $\lambda$-copies of $B$, provided $\catl{C}$ is sufficiently complete:
$$\bfig
\node a(0, 1000)[A]
\node b(600, 700)[B]
\node prod(0, 700)[\prod_\lambda B]

\arrow/->/[a`b;f,g]
\arrow/->/[prod`b;\pi_{j \in \lambda}]
\arrow/-->/[a`prod;\exists ! h]
\efig$$
Now, for each index $j \in \lambda$ we may \emph{freely} choose either a morphism $f$ or $g$ to make a cone over $B$'s. There are $\{f, g\}^\lambda$ of such cones. Because, by the property of product $\prod_\lambda B$, each cone uniquely determines a morphism $\mor{h}{A}{\prod_\lambda B}$, the cardinality of the set $\hom(A, \prod_\lambda B)$ is at least $\{f, g\}^\lambda$. This contradicts our claim that the set of all morphism has cardinality $\lambda$, since in ZFC there could be no injection $2^\lambda \rightarrow \lambda$.

The result relies on two fundamental properties of standard set theory. One is non-uniformity of set-indexed collections; or arbitrary richness of set-indexed collections --- for any cardinal $\lambda$ and any set $K$, we may make a free/independent/non-uniform choice of one of the elements of $K$ for each index $j \in \lambda$. Another is the property of being $2$-valued. We say that a set theory is $2$-valued if the set  $2 = 1 \sqcup 1$ forms the subset classifier. By the classical diagonal argument one may show that in any topos with a subobject classifier $\Omega$ there could be no injection $\Omega^A \rightarrow A$. Therefore, the contradiction in the Freyd's argument follows from the fact that the subobject classifier in ZFC has only two elements.

One may wonder if the above properties are crucial to the result of Freyd. And the answer is --- yes, but in two different ways. In late 80's Martin Hyland showed that there exists a small (weakly) complete non-degenerated category internal to the effective topos \cite{pitts} \cite{hyland}. The key argument in his work is that the cones in the mentioned category have to satisfy a suitable smoothness condition (recall Example \ref{e:omega:set}) --- there is no way to form an arbitrary collection $\{f, g\}^\lambda$ as in the above proof. On the other hand, the result of Freyd carries to any cocomplete topos, in particular, to any Grothendieck topos --- no matter how ``big'', or ``complicated'' the subobject classifier in the topos is. In a sense, the second property is used on a higher meta-level than the first one\footnote{The second property refers to the ambient category of the 2-category of internal categories. It is worth pointing out that contrary to some common beliefs the above argument is purely constructive --- even though it may not imply that the set $\hom(A, B)$ has cardinality less than $2$.}, and we shall not investigate it in this paper.

Now, we try to reproduce the result of Freyd in any sufficiently cocomplete 2-category.
\begin{lemma}\label{l:incompleteness}
Let $\catl{W}$ be a 2-category. Consider a pair of parallel morphisms ${\mor{a, b}{X}{C}}$, and a pair of distinct parallel 2-morphisms $\mor{f, g}{a}{b}$ in $\catl{W}$. Let us assume that for a set $\lambda$ the 2-coproduct $\coprod_\lambda X$ exists, and that there is a right Kan extension $\ran_{\nabla}(b \circ \nabla)$ of $\mor{b \circ \nabla}{\coprod_\lambda X}{C}$ along $\mor{\nabla}{\coprod_\lambda X}{X}$, where $\nabla$ is the coproduct codiagonal. Then the set $\hom(a, \ran_\nabla(b \circ \nabla))$ has cardinality at least $2^\lambda$.
\end{lemma}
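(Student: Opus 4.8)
The plan is to reduce the cardinality count to two successive applications of universal properties: that of the right Kan extension and that of the 2-coproduct. Writing $R = \ran_\nabla(b \circ \nabla)$ for brevity, the first step is to invoke the defining bijection of the right Kan extension (the product bijection of Definition \ref{d:param:prod}) with the parameter $h = a$, which yields a natural bijection
$$\hom(a, R) \approx \hom(a \circ \nabla, b \circ \nabla).$$
Thus it suffices to show that the set of 2-morphisms $a \circ \nabla \Rightarrow b \circ \nabla$, taken in the category $\hom(\coprod_\lambda X, C)$, has cardinality at least $2^\lambda$.

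The second step is to decompose this hom-set using the 2-coproduct. Since $\coprod_\lambda X$ is the $\lambda$-indexed 2-coproduct, whiskering with the coprojections $\mor{\iota_j}{X}{\coprod_\lambda X}$ gives an isomorphism of categories $\hom(\coprod_\lambda X, C) \approx \prod_{j \in \lambda} \hom(X, C)$. Because the codiagonal satisfies $\nabla \circ \iota_j = \id{X}$ for every $j \in \lambda$, under this isomorphism $a \circ \nabla$ corresponds to the constant family $(a)_{j \in \lambda}$ and $b \circ \nabla$ to the constant family $(b)_{j \in \lambda}$. Consequently a 2-morphism $a \circ \nabla \Rightarrow b \circ \nabla$ is precisely a $\lambda$-indexed family $(\phi_j)_{j \in \lambda}$ of 2-morphisms $\mor{\phi_j}{a}{b}$ in $\hom(X, C)$, so that $\hom(a \circ \nabla, b \circ \nabla) \approx \prod_{j \in \lambda} \hom(a, b)$.

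The final step uses the two distinct 2-morphisms $\mor{f, g}{a}{b}$ to realise the non-uniform choice at the heart of Freyd's argument. For each function $\mor{c}{\lambda}{\{f, g\}}$ the family $(c(j))_{j \in \lambda}$ is a legitimate element of $\prod_{j \in \lambda} \hom(a, b)$, hence determines a 2-morphism $a \circ \nabla \Rightarrow b \circ \nabla$. Since $f \neq g$, two distinct functions $c, c'$ differ at some index and therefore yield distinct families, hence distinct 2-morphisms; as there are $2^\lambda$ such functions, the product has at least $2^\lambda$ elements. Transporting back along the two bijections established above gives $|\hom(a, R)| \geq 2^\lambda$.

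I do not expect a serious obstacle; the entire content lies in arranging the two universal properties so that the free choice of $f$ versus $g$ at each coordinate survives intact. The one point requiring care is the compatibility of whiskering by $\nabla$ with the coproduct decomposition --- that is, verifying that $a \circ \nabla$ and $b \circ \nabla$ really correspond to the \emph{constant} families and that 2-morphisms between them correspond to \emph{arbitrary} (not uniform) families over the index set $\lambda$. This independence at each index is exactly what produces $2^\lambda$ rather than merely $2$, and is the precise 2-categorical analogue of the ``non-uniformity of set-indexed collections'' identified as the crucial ingredient of the classical proof.
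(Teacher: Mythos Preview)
Your proposal is correct and follows essentially the same argument as the paper: both use the universal property of the 2-coproduct to exhibit at least $2^\lambda$ distinct 2-morphisms $a \circ \nabla \Rightarrow b \circ \nabla$ via independent choices of $f$ or $g$ at each index, and then transport this lower bound through the Kan-extension bijection $\hom(a, \ran_\nabla(b\circ\nabla)) \approx \hom(a\circ\nabla, b\circ\nabla)$. The only cosmetic difference is that you invoke the Kan-extension bijection first and the coproduct decomposition second, whereas the paper reverses that order.
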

\begin{proof}
Consider a diagram that satisfies the hypothesis of the lemma:
$$\bfig
\node coprod(800, 1000)[\coprod_\lambda X]
\node x(1400, 1000)[X]
\node c(1800, 1000)[C]

\node xb(800, 600)[X]

\node virtaf(1560, 1065)[]
\node virtbf(1560, 950)[]

\node virtag(1640, 1065)[]
\node virtbg(1640, 950)[]

\node virtseps(1200, 750)[]
\node virtteps(1100, 850)[]

\arrow/{@{>}@/_1em/}/[x`coprod;\iota_{i \in \lambda}]
\arrow|m|/->/[coprod`x;\nabla]

\arrow/->/[coprod`xb;\nabla]
\arrow|r|/{@{>}@/_2em/}/[xb`c;\ran_\nabla(b \circ \nabla)]
%\arrow|r|/{@{>}@/_3em/}/[xb`c;a]

\arrow|m|/{@{>}@/^1em/}/[x`c;a]
\arrow|m|/{@{>}@/_1em/}/[x`c;b]

\arrow/=>/[virtaf`virtbf;f]
\arrow|r|/=>/[virtag`virtbg;g]
\arrow/=>/[virtseps`virtteps;\epsilon]
\efig$$
where $\iota_{i \in \lambda}$ are coproduct's injections. We form two cocones --- one by constantly choosing $a$, and another by constantly choosing $b$ for each index $i \in \lambda$. By the universal property of coproduct $\coprod_\lambda X$ these cocones induce unique morphisms $\mor{a \circ \nabla}{\coprod_\lambda X}{C}$ and $\mor{b \circ \nabla}{\coprod_\lambda X}{C}$, respectively. We may form a transformation of cones by independently choosing either a 2-morphism $\mor{f}{a}{b}$ or $\mor{g}{a}{b}$ for each index $i \in \lambda$. There are $\{f, g\}^\lambda$ of such transformations, and by the universal property of 2-coproduct, each transformation uniquely determines a 2-morphism $a \circ \nabla \rightarrow b \circ \nabla$. Therefore, $\hom(a \circ \nabla, b \circ \nabla)$ has cardinality at least $2^\lambda$.
The definition of the right Kan extension $\ran_\nabla(b \circ \nabla)$ says that there is a natural isomorphism:
$$\hom(a, \ran_\nabla(b \circ \nabla)) \approx \hom(a \circ \nabla, b \circ \nabla)$$
thus, by the above, $\hom(a, \ran_\nabla(b \circ \nabla))$ has cardinality at least $2^\lambda$, which completes the proof.
\end{proof}
There is an obvious generalisation of the above lemma, which may be obtained by replacing cardinal $\lambda$ with arbitrary category, and coproduct $\coprod_\lambda X$ with tensor $\lambda \otimes X$. Indeed, by the definition of tensor $\hom(a \circ \pi, b \circ \pi) \approx \hom(\Delta(a), \Delta(b))$, where $\mor{\Delta(a), \Delta(b)}{\lambda}{\hom(X, C)}$ are constant functors assigning everything to $a$ and $b$ respectively, and $\pi$ plays the role of the codiagonal $\nabla$. Therefore $\hom(a, \ran_\pi(b \circ \pi)) \approx \hom(\Delta(a), \Delta(b))$. Choosing discrete $\lambda$ puts no constraints on transformations $\Delta(a) \rightarrow \Delta(b)$ and leads to the conclusion $\hom(\Delta(a), \Delta(b)) \approx \hom(a, b)^\lambda$

Before we state the 2-categorical incompleteness theorem, let us write explicitly definition of a representable poset and of a 2-generating family.
\begin{definition}[Representable poset]\label{d:rep:poset}
An object $A$ from a 2-category $\catl{W}$ is representably posetal if for every object $X \in \catl{W}$, the category $\hom(X, A)$ is a poset.
\end{definition}
%There are always a lot of confusions concerning size issues in category theory. To cut off these confusions we shall assume for now that our (meta-)category theory is placed inside a Grothendieck universe, and we shall write explicitly each size assumption.
\begin{definition}[2-generating family]\label{d:2gen}
A class of objects $G$ from a 2-category $\catl{W}$ is called a 2-generating family if for every pair of 2-morphisms $\alpha, \beta$ between parallel 1-morphisms from an object $A \in \catl{W}$ to an object $B \in \catl{W}$ the following holds: if for every 2-morphism $\tau$ between parallel one morphisms from an object $X \in G$ to object $A$ the equality of compositions holds $\alpha \circ \tau = \beta \circ \tau$, then $\alpha = \beta$.  
\end{definition}
We shall also recall the notion of density in the context of 2-categories.
\begin{definition}[Density]\label{d:density}
A 2-functor $\mor{F}{\catl{C}}{\catl{D}}$ is dense if the 2-functor $A \mapsto \hom_{\catl{D}}(F(-), A)$ is fully faithful. 
\end{definition}
\begin{theorem}[Incompleteness theorem]\label{t:incompleteness}
Let $\catl{W}$ be a locally small 2-category and $G \subset \catl{W}$ a 2-generating family. Furthermore, assume that objects from $G$ have tensors with sets.
%, and all objects have cotensors with $2$.
If an object $C \in \catl{W}$ has all ad hoc simple products parametrised by $G$, then $C$ is representably posetal.
\end{theorem}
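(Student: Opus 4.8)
The plan is to prove representable posetality pointwise and contrapositively. Fix an object $X$ and suppose $\hom(X,C)$ fails to be a poset; then there are parallel $1$-morphisms $\mor{a,b}{X}{C}$ and two \emph{distinct} parallel $2$-morphisms $f,g \colon a \Rightarrow b$. My first move is to push this data down onto a member of the generating family. Since $G$ is a $2$-generating family (Definition \ref{d:2gen}) and $f \neq g$, there must exist an object $W \in G$ and a $2$-morphism $\tau \colon r \Rightarrow s$ between parallel $1$-morphisms $\mor{r,s}{W}{X}$ with $f \circ \tau \neq g \circ \tau$. Writing $a' = a \circ r$, $b' = b \circ s \colon W \to C$ and $f' = f \circ \tau$, $g' = g \circ \tau$, I obtain two distinct parallel $2$-morphisms $f', g' \colon a' \Rightarrow b'$ whose common source object $W$ now lies in $G$. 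This is the only place the generating family is used, and it is exactly what lets the remaining argument invoke the products that are assumed to exist (which are only postulated for families parametrised by $G$).

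Next I feed $a', b', f', g'$ into Lemma \ref{l:incompleteness}. Both of its hypotheses hold for every set $\lambda$: the $2$-coproduct $\coprod_\lambda W = \lambda \otimes W$ exists because objects of $G$ have tensors with sets, and the right Kan extension $\ran_\nabla(b' \circ \nabla)$ along the codiagonal $\nabla \colon \coprod_\lambda W \to W$ exists because it is precisely the ad hoc simple product $\prod_\lambda b'$, which $C$ is assumed to possess for families parametrised by $G$. The Lemma then yields, for every set $\lambda$, a set $\hom(a', \ran_\nabla(b' \circ \nabla))$ of cardinality at least $2^\lambda$; equivalently, by the computation recorded immediately after the Lemma, $\hom(a', \ran_\nabla(b' \circ \nabla)) \approx \hom(a', b')^\lambda$, and here $|\hom(a',b')| \geq 2$ since $f' \neq g'$.

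The proof then closes by a Freyd-style cardinality overflow, and this is the step I expect to be the main obstacle, or at least the one demanding the most care. Using local smallness, the $2$-cells of the hom-category $\hom(W, C)$ form a set; let $\lambda$ be a cardinal bounding its totality of $2$-cells. The object $\ran_\nabla(b' \circ \nabla)$ is again a $1$-morphism $W \to C$, hence an object of $\hom(W, C)$, so $\hom(a', \ran_\nabla(b' \circ \nabla))$ is one of its hom-sets and therefore has cardinality at most $\lambda$. This contradicts the lower bound $2^\lambda > \lambda$ obtained above, forcing $\hom(X,C)$ to be a poset after all. The delicate point is pinning down exactly which form of smallness guarantees that a single cardinal $\lambda$ can simultaneously index the product $\prod_\lambda b'$ \emph{and} bound the resulting family of $2$-cells, which is precisely the self-referential move in Freyd's original argument; I would also check that the generalised, tensor-indexed form of the Lemma does not covertly require $\lambda$ to grow with the construction.
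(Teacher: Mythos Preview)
Your proposal is correct and follows essentially the same route as the paper. The only organisational difference is the placement of the $2$-generating step: the paper first shows directly that $\hom(X,C)$ is a poset for every $X \in G$ (by running the Freyd cardinality argument there) and only then invokes the $2$-generating property to conclude representable posetality, whereas you start from an arbitrary $X$, push the witnessing pair $f \neq g$ down to some $W \in G$ via the generating family, and run the cardinality argument on $W$. These are contrapositive reorderings of one another.

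Your closing worry about the ``self-referential'' choice of $\lambda$ is unnecessary: local smallness makes $\hom(W,C)$ a small category, so its set of morphisms has a well-defined cardinality $\lambda$; the hypothesis gives the ad hoc simple product $\prod_\lambda b'$ for that very $\lambda$; and since $\prod_\lambda b'$ is again an object of $\hom(W,C)$, the hom-set $\hom(a',\prod_\lambda b')$ is bounded by $\lambda$. There is no hidden growth of $\lambda$ during the construction --- this is exactly Freyd's move, and the paper carries it out in one sentence.
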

\begin{proof}
Let $X$ be an object in $G \subset \catl{W}$. Let us assume that there exists a pair of distinct 2-morphisms $\mor{f, g}{a}{b} \in \hom(X, C)$, and choose a cardinal $\lambda$ equal to the cardinality of the underlying set of morphisms of $\hom(X, C)$. By Lemma \ref{l:incompleteness}, $\hom(a, \prod_{i \in \lambda}  b)$ has cardinality at least $2^\lambda$, which leads to the contradiction $2^\lambda \leq \lambda$ in ZFC. Therefore, $\hom(X, C)$ is a poset on each $X \in G$, thus by the property of a 2-generating family, $C$ is representably posetal.
\end{proof}
There is also a version of the incompleteness theorem directly using adjunctions to codiagonals (recall Example \ref{e:prod:via:param}).
\begin{corollary}[Special incompleteness theorem]\label{t:special:incompleteness}
Let $A \in \catl{W}$. If for every set $X$ the constant product $\prod_X A$ exists, and the diagonal $\mor{\Delta}{A}{\prod_X A}$ has right adjoint, then $A$ is representably posetal.
\end{corollary}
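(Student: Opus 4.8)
The plan is to derive the Corollary from Theorem \ref{t:incompleteness} by exhibiting a suitable 2-generating family and showing that the hypotheses of the special statement imply those of the general theorem for a single object. First I would take the family $G = \{A\}$, the singleton containing $A$ itself. This is automatically a 2-generating family in the strong sense needed: to verify that $A$ is representably posetal, Theorem \ref{t:incompleteness} only demands that $\hom(X, C)$ be a poset for each $X \in G$; taking $G = \{A\}$ and $C = A$ reduces the conclusion ``$A$ is representably posetal'' to the single requirement that $\hom(X, A)$ be a poset for all $X$, which is exactly the definition of representable posetality (Definition \ref{d:rep:poset}). So the role of the 2-generating family collapses, and the only real work is to manufacture, for every set $X$, the ad hoc simple product parametrised over the one-element family $G$ from the given adjunction.

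The key translation is provided by Example \ref{e:prod:via:param} together with Definition \ref{d:adhoc}: a simple product $\prod_{i \in \lambda} \tau_i$ is, by definition, the parametrised product along the cartesian projection $\lambda \times \lambda' \to \lambda$, and in the case relevant to the incompleteness argument this amounts to a product along a codiagonal $\nabla \colon \coprod_\lambda X \to X$. As Example \ref{e:prod:via:param} shows, the existence of such a parametrised product along $\nabla$ is equivalent, via the 2-Yoneda argument, to the existence of a right adjoint to the diagonal $\hom(X, A) \to^\Delta \hom(X, A) \times \hom(X, A)$, and more generally to a right adjoint to the multi-ary diagonal $A \to^\Delta \prod_X A$. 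Thus I would argue: the hypothesis that the constant product $\prod_X A$ exists and that $\Delta \colon A \to \prod_X A$ has a right adjoint is, by the usual 2-Yoneda transposition, precisely the statement that $A$ admits the ad hoc simple products parametrised by $G = \{A\}$ needed to run Lemma \ref{l:incompleteness}.

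Having matched the hypotheses, the conclusion follows by direct appeal to Theorem \ref{t:incompleteness} (equivalently, by re-running the core of Lemma \ref{l:incompleteness} with $C = A$): if $\hom(X, A)$ contained a pair of distinct parallel 2-morphisms $f, g \colon a \to b$, then setting $\lambda$ equal to the cardinality of the morphisms of $\hom(X, A)$ and forming $\prod_\lambda A$ would yield, by Lemma \ref{l:incompleteness}, a set $\hom(a, \prod_\lambda b)$ of cardinality at least $2^\lambda$ — but this hom-set sits inside $\hom(X, A)$ whose cardinality is $\lambda$, contradicting $2^\lambda \leq \lambda$ in ZFC. Hence $\hom(X, A)$ is a poset for every $X$, i.e.\ $A$ is representably posetal.

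The main obstacle is purely bookkeeping rather than conceptual: I must be careful that the constant product $\prod_X A$ together with its diagonal adjunction really supplies the \emph{stable} ad hoc simple products along the relevant codiagonals that Lemma \ref{l:incompleteness} consumes, and that the tensor $\lambda \otimes A = \coprod_\lambda A$ used implicitly in Lemma \ref{l:incompleteness} exists — here it does, since the constant product being available on the representable side is exactly what the 2-Yoneda transposition in Example \ref{e:prod:via:param} converts into the adjoint-to-codiagonal data. I expect the only delicate point to be confirming that ``right adjoint to $\Delta \colon A \to \prod_X A$'' matches the Kan-extension formulation of the product along $\nabla$ used in the lemma, but this is precisely the equivalence established in Example \ref{e:prod:via:param}, so no genuinely new argument is required.
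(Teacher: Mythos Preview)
Your third paragraph contains the right idea, but the scaffolding you build around it in the first two paragraphs does not work, and invoking Theorem \ref{t:incompleteness} and Lemma \ref{l:incompleteness} in $\catl{W}$ is a mis-step.

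First, taking $G=\{A\}$ is not legitimate. A singleton need not be a $2$-generating family in the sense of Definition \ref{d:2gen}, and the proof of Theorem \ref{t:incompleteness} genuinely uses that property: it first shows $\hom(X,C)$ is a poset for $X\in G$ and then \emph{uses} $2$-generation to conclude posetality for all $X$. With $G=\{A\}$ you would only obtain that $\hom(A,A)$ is a poset. Your sentence ``reduces the conclusion \ldots to the single requirement that $\hom(X,A)$ be a poset for all $X$'' conflates the conclusion you want with the one the theorem actually delivers.

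Second, Lemma \ref{l:incompleteness} (and hence Theorem \ref{t:incompleteness}) consumes \emph{tensors} $\coprod_\lambda X$ of the parametrising object and a right Kan extension along the codiagonal. The hypothesis of the corollary gives you \emph{cotensors} $\prod_X A$ of the target object and a right adjoint to the diagonal. These are not the same datum in $\catl{W}$, and Example \ref{e:prod:via:param} does not bridge the gap: that example explicitly assumes $\catl{W}$ has coproducts, which the corollary does not. So you cannot ``run Lemma \ref{l:incompleteness}'' inside $\catl{W}$ as written.

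The clean route --- and the one the paper intends --- bypasses $G$ and the lemma in $\catl{W}$ altogether. Representable $2$-functors $\hom_{\catl{W}}(Y,-)$ preserve products and adjunctions, so the hypothesis transports to: for every $Y\in\catl{W}$ and every set $X$, the ordinary diagonal $\hom(Y,A)\to\hom(Y,A)^X$ has a right adjoint, i.e.\ the locally small category $\hom(Y,A)$ has all set-indexed products. Now apply the classical Freyd argument (equivalently, Lemma \ref{l:incompleteness} in $\onecat$ with $X=1$, where the required coproducts trivially exist) to conclude $\hom(Y,A)$ is a poset for every $Y$. This is exactly the computation in your third paragraph, once you stop attributing it to Lemma \ref{l:incompleteness} in $\catl{W}$ and recognise it as Freyd's theorem applied hom-wise.
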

\begin{example}[Freyd theorem]
The classical Freyd theorem is obtained from Theorem \ref{t:incompleteness} by taking $\catl{W} = \cat$, and recalling that the terminal category $1$ is a 2-generator in $\cat$. Alternatively, one may use the special incompleteness theorem in the following way: in $\cat$  cotensors $X \pitchfork \catl{A} = \catl{A}^X = \prod_X \catl{A}$ exist for any small category $\catl{A}$ and every set $X$; Corollary \ref{t:special:incompleteness} says that if for every $X$ there is a right adjoint to the diagonal $\mor{\Delta}{\catl{A}}{\catl{A}^X}$ then $\catl{A}$ is posetal.   
\end{example}
We shall observe in the next section that for a 2-category of internal categories, the above notion of being representably posetal coincides with the usual notion of an internal poset (Corollary \ref{c:posetal}), and ad hoc products parametrised by discrete objects correspond to the internal products in the usual sense (Corollary \ref{c:bc:condition}).
\begin{definition}[Internal poset]\label{d:internal:poset}
Let $\catl{C}$ be a category with finite limits. A $\catl{C}$-internal poset $A$ is a $\catl{C}$-internal category for which the domain and codomain morphisms $\mor{\mathit{dom},\mathit{cod}}{A_1}{A_0}$ are jointly mono, meaning that the morphism $\mor{\langle\mathit{dom},\mathit{cod}\rangle}{A_1}{A_0 \times A_0}$ is mono.
\end{definition}
Therefore, we may write the following corollary.
\begin{corollary}\label{c:int:freyd}
Let $\cat(\catl{C})$ be the 2-category of categories internal to a finitely complete locally small category $\catl{C}$ that has tensors with sets. If a $\catl{C}$-internal category $C \in \cat(\catl{C})$ has simple ad hoc polymorphism then it is an internal poset.
\end{corollary}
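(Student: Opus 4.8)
The plan is to derive Corollary~\ref{c:int:freyd} as a direct instance of the Incompleteness Theorem~\ref{t:incompleteness}, by verifying that the 2-category $\cat(\catl{C})$ meets all three hypotheses of that theorem for an appropriate 2-generating family $G$. The theorem concludes that $C$ is \emph{representably posetal}, so the two things I would need to supply are: (i) a suitable 2-generating family $G$ whose objects have tensors with sets, and (ii) the translation of ``representably posetal'' in $\cat(\catl{C})$ into ``internal poset'' in the sense of Definition~\ref{d:internal:poset}. The excerpt signals both of these as already-available facts --- it explicitly promises that being representably posetal coincides with being an internal poset (Corollary~\ref{c:posetal}) and that ad hoc products parametrised by discrete objects correspond to the usual internal products (Corollary~\ref{c:bc:condition}) --- so the proof is essentially an act of assembly rather than fresh computation.

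First I would fix the 2-generating family. The natural choice is the family of discrete objects, i.e.\ $G = \disc(\cat(\catl{C})) = \catl{C}$ under the identification from Example~\ref{e:discrete:category}, or more conservatively the single terminal object together with discrete objects, mirroring the classical case where the terminal category $1$ is a 2-generator in $\cat$. Two points then need checking: that $G$ is genuinely a 2-generating family in the sense of Definition~\ref{d:2gen} (distinct internal natural transformations are detected by precomposition with morphisms out of discrete objects), and that the objects of $G$ admit tensors with sets. The latter follows from the hypothesis that $\catl{C}$ has tensors with sets, since for a discrete object $X$ the tensor $\lambda \otimes X$ is computed as the $\lambda$-fold copower, which exists in $\cat(\catl{C})$ precisely because $\catl{C}$ has these tensors (the $\lambda$-indexed coproduct of the underlying object-of-objects). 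The phrase ``simple ad hoc polymorphism'' in the hypothesis is exactly the statement that $C$ has all ad hoc simple products parametrised by such $X$, in the sense of Definition~\ref{d:adhoc}, so the product hypothesis of Theorem~\ref{t:incompleteness} is met verbatim.

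With the hypotheses discharged, Theorem~\ref{t:incompleteness} yields that $C$ is representably posetal: for every object $Y \in \cat(\catl{C})$ the category $\hom(Y, C)$ is a poset. The final move is to convert this external statement back into the internal one. Taking $Y$ to range over representable objects and invoking the characterisation promised in Corollary~\ref{c:posetal}, representable posetality forces the internal hom-object structure of $C$ to be thin, which is exactly the condition that $\langle \mathit{dom}, \mathit{cod}\rangle \colon C_1 \to C_0 \times C_0$ be mono --- the defining property of a $\catl{C}$-internal poset in Definition~\ref{d:internal:poset}. Hence $C$ is an internal poset, as required.

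\textbf{The main obstacle} I anticipate is not in the logical skeleton, which is a clean specialisation, but in pinning down the translation step (ii) cleanly, namely verifying that ``$\hom(Y,C)$ is a poset for all $Y$'' is equivalent to ``$\langle\mathit{dom},\mathit{cod}\rangle$ is mono'' inside $\cat(\catl{C})$. This is where one genuinely uses the internal structure of $\cat(\catl{C})$: one direction is routine (an internal poset clearly represents posets), but the converse requires testing monomorphicity of $\langle\mathit{dom},\mathit{cod}\rangle$ against generalised elements and reading off the thinness of each $\hom(Y,C)$, which is precisely the content the paper has deferred to Corollary~\ref{c:posetal}. Since that result is available to me by assumption, the corollary reduces to citing it together with Theorem~\ref{t:incompleteness}, and the proof is short.
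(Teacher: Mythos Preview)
Your proposal is correct and follows essentially the same route as the paper: take $G=\disc(\cat(\catl{C}))\simeq\catl{C}$, verify the hypotheses of Theorem~\ref{t:incompleteness}, and then invoke Corollary~\ref{c:posetal} to pass from representably posetal to internal poset. The paper's one-line proof compresses your step~(i) by observing that $\catl{C}$ is 2-\emph{dense} in $\cat(\catl{C})$, which immediately gives the 2-generating property; otherwise the argument is identical.
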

\begin{proof}
The category $\catl{C}$ is a 2-dense subcategory of $\cat(\catl{C})$ spanned on discrete objects (i.e.~the inclusion functor is dense), therefore the class of discrete objects is a 2-generating family.  
\end{proof}
A direct consequence of Corollary \ref{c:int:freyd} is that there are no small complete non-degenerated categories internal to a Grothendieck topos.

We can also get instantly from Theorem \ref{t:incompleteness} the incompleteness theorem for enriched categories.
\begin{corollary}
Let $\catl{V}$ be a monoidal category. If a small $\catl{V}$-enriched category is complete, then it is representably posetal.
\end{corollary}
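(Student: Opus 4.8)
The plan is to specialize Theorem~\ref{t:incompleteness} to the 2-category $\onecat{V}$ of $\catl{V}$-enriched categories. First I would identify the correct 2-generating family. By Example~\ref{e:discrete:category}, the inclusion of $\catw{Set}$ (equivalently, of discrete $\catl{V}$-categories) into $\onecat{V}$ gives the notion of discreteness; the key observation I want is that this family of discrete objects is 2-generating in the sense of Definition~\ref{d:2gen}. Concretely, a $\catl{V}$-enriched natural transformation is determined by its components at objects, and objects are picked out by $\catl{V}$-functors from the unit enriched category $I$ (the one-object $\catl{V}$-category with hom-object the monoidal unit); so if two 2-morphisms $\alpha,\beta$ agree after whiskering with every transformation out of discrete objects, they agree componentwise and hence are equal.

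Second I would verify the tensor hypothesis of Theorem~\ref{t:incompleteness}: objects of the generating family must have tensors with sets. For a discrete $\catl{V}$-category $X$, the tensor $\lambda \otimes X$ is simply the coproduct $\coprod_\lambda X$ of $\lambda$-many copies, which in $\onecat{V}$ is just the disjoint union of object-sets with hom-objects inherited (and the monoidal unit placing the copies apart); this exists as soon as $\catl{V}$ has the relevant coproducts, and for discrete $X$ it requires essentially nothing beyond set-indexed coproducts in $\catw{Set}$. With both hypotheses in place, Theorem~\ref{t:incompleteness} applies directly: if the enriched category $C$ has all ad hoc simple products parametrised by $G$, then $C$ is representably posetal.

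Third I would connect representable posetality to the stated completeness hypothesis. The statement assumes $C$ is complete (in the enriched sense), so in particular it admits all $\catl{V}$-enriched products indexed by sets, i.e.\ cotensors/products $\prod_\lambda C$ with diagonals $\Delta\colon C \to \prod_\lambda C$ having right adjoints; by Example~\ref{e:prod:via:param} and the discussion of ad hoc polymorphism, these supply exactly the ad hoc simple products parametrised by discrete objects that Theorem~\ref{t:incompleteness} demands. Thus completeness $\Rightarrow$ existence of the required ad hoc products $\Rightarrow$ representable posetality, which is the conclusion.

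The main obstacle I expect is the 2-generating claim for discrete objects, since Definition~\ref{d:2gen} quantifies over \emph{2-morphisms} $\tau$ between parallel 1-morphisms out of $X\in G$, not merely over 1-morphisms. When $X$ is discrete there are no nontrivial 2-morphisms in $\hom(X,C)$ other than those forced by components, so I must argue carefully that testing against all discrete $X$ (equivalently, against all objects and all hom-wise data) genuinely detects equality of enriched natural transformations $\alpha,\beta\colon a \Rightarrow b$. The remaining steps---existence of tensors and the reduction of completeness to ad hoc products---are routine given the machinery already developed in Example~\ref{e:prod:via:param} and Example~\ref{e:connectives:enriched}.
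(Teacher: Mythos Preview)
Your plan is correct in spirit and would work, but it is more elaborate than the paper's argument and introduces an obstacle that the paper simply sidesteps. The paper's one-line proof observes that the 2-category $\onecat{V}$ of small $\catl{V}$-enriched categories has all small coproducts (disjoint unions, with object-sets inherited from $\catw{Set}$); hence \emph{every} object of $\onecat{V}$ has tensors with sets. One may therefore take $G = \onecat{V}$ itself as the 2-generating family in Theorem~\ref{t:incompleteness}, which is trivially 2-generating. Completeness of $C$ then supplies the required ad hoc simple products exactly as you describe via Example~\ref{e:prod:via:param}, and the theorem applies.

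By choosing $G$ to be the discrete objects instead, you create for yourself the ``main obstacle'' you flag in your last paragraph: verifying that discrete $\catl{V}$-categories form a 2-generating family. Your sketch of that argument (enriched natural transformations are determined by components, and components are detected by functors out of the unit $\catl{V}$-category) is fine, but it is extra work, and it also tacitly imports the hypothesis from Example~\ref{e:discrete:category} that $\catl{V}$ has an initial object so that discrete $\catl{V}$-categories exist in the first place. The paper's choice of $G$ needs no such verification. A minor point: in your third paragraph the phrase ``cotensors/products $\prod_\lambda C$ with diagonals $\Delta\colon C \to \prod_\lambda C$'' conflates external cotensors of $C$ in $\onecat{V}$ with internal set-indexed products inside $C$; it is the latter you want, and you should keep the two notions separate when you write this up.
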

\begin{proof}
The 2-category of small $\catl{V}$-enriched categories has small coproducts inherited from $\catw{Set}$.
\end{proof}
\begin{example}[$\omega\catw{Set}$ and Hyland's effective topos]
The incompleteness theorem does not work in $\cat(\omega\catw{Set})$ nor in the categories internal to Hyland's effective topos, because these categories do not have ``sufficiently big'' coproducts. Let us show that $\omega\catw{Set}$ does not have even countable coproducts on the terminal object. To obtain a contradiction, assume that a coproduct $\coprod_{n \in N} 1$ exists. Consider the natural number object in $\omega$-sets $N_\omega = \{\tuple{n, n} \colon n \in N\}$. Every $\omega$-function $\mor{k}{1}{N_\omega}$ is uniquely determined by a natural number $k \in N$, and by the universal property of coproduct $\coprod_{n \in N} 1$, every family $n \mapsto k_n$ indexed by natural numbers $n \in N$ uniquely determines an $\omega$-function $\mor{h}{\coprod_{n \in N} 1}{N_\omega}$ with $h(n) = k_n$. Because proofs in $N_\omega$ are disjoint, $h$ is determined by a partially recursive function. This leads to a contradiction since not every function $N \rightarrow N$ is partially recursive.

% Because $\catw{Set}$ is its reflective subcategory, the underlying set of $\coprod_{n \in \aleph_0} 1$ have to be (isomorphic to) $N$. Let us consider an $\omega$-set $K = \{\langle n, \{n\} \colon n \in N\}$. Every $\omega$-function $\mor{k}{1}{K}$ is uniquely determined by a natural number $k \in N$.  
\end{example}

% it is better perhaps to adjust this to discrete framework
%\begin{lemma}
%With the assumptions like in the above lemma the set $\hom(X, C \pitchfork 2)$ has cardinality at least $2^\lambda$, provided cotensor $C \pitchfork 2$ exists.
%\end{lemma}
%\begin{proof}
%todo
%\end{proof}
%\begin{theorem}
%Let $\catl{W}$ be a cocomplete $2$-category. If for a morphism $\mor{c}{X}{C}$ and any morphism $\mor{f}{X}{Y}$ the right Kan extension of $c$ along $f$ exists, then $\hom(X, C)$ is a preorder. 
%\end{theorem}
\section{The associated category}
\label{s:associated}
%examples: Cat and 0, 1, set, full (discrete, dense, dense + discrete)
This section is intended to provide a framework that allows us to better understand 2-categorical models for lambda calculi, and under some conditions embed them into a 2-topos of internal categories. We start with an explicit description of a category associated to an object from a 2-category with a notion of \emph{canonical} discreteness, and then move to a more abstract framework.

In the remaining of the section, we shall use extensively the notion of ``inserter'', which is a particular case of a $\onecat$-weighted limit \cite{kelly}\cite{elements}\cite{borceux}\cite{elephant}.
\begin{definition}[Inserter]\label{d:inserter}
Let us write $2$ for the category $\{0 \to 1 \}$ and consider a functor $\mor{W}{\{\xy \morphism(0,0)|a|/@{>}@<3pt>/<250,0>[\bullet`\bullet;] \morphism(0,0)|b|/@{>}@<-3pt>/<250,0>[\bullet`\bullet;]\endxy\}}{\onecat}$ that maps a category $\{\xy \morphism(0,0)|a|/@{>}@<3pt>/<250,0>[\bullet`\bullet;] \morphism(0,0)|b|/@{>}@<-3pt>/<250,0>[\bullet`\bullet;]\endxy\}$ to the diagram $\{\xy \morphism(0,0)|a|/@{>}@<3pt>/<250,0>[1`2;0] \morphism(0,0)|b|/@{>}@<-3pt>/<250,0>[1`2;1]\endxy\}$ in $\onecat$, where functors $0$ and $1$ are constant and map the whole category to $0$ and $1$ respectively. Let $\catl{W}$ be a 2-category, and $F$ a functor $\{\xy \morphism(0,0)|a|/@{>}@<3pt>/<250,0>[\bullet`\bullet;] \morphism(0,0)|b|/@{>}@<-3pt>/<250,0>[\bullet`\bullet;]\endxy\} \rightarrow \catl{W}$. An inserter of $F$ is a representation of:
$$X \mapsto \hom(W(-), \hom_{\catl{W}}(X, F(-)))$$
That is, an object $I \in \catl{W}$ and an isomorphism between categories:
$$\hom(W(-), \hom_{\catl{W}}(X, F(-))) \approx \hom_{\catl{W}}(X, I)$$
natural in $X$.
\end{definition}
Let us rewrite the definition of an inserter in more explicit terms. A functor $\mor{F}{\{\xy \morphism(0,0)|a|/@{>}@<3pt>/<250,0>[\bullet`\bullet;] \morphism(0,0)|b|/@{>}@<-3pt>/<250,0>[\bullet`\bullet;]\endxy\}}{\catl{W}}$ corresponds to a diagram $\{\xy \morphism(0,0)|a|/@{>}@<3pt>/<250,0>[A`B;f] \morphism(0,0)|b|/@{>}@<-3pt>/<250,0>[A`B;g]\endxy\}$ in $\catl{W}$. A natural transformation in $\hom(W(-), \hom(X, F(-)))$ chooses a morphism $\mor{x}{X}{A}$ together with a 2-morphism $\mor{\alpha}{f \circ x}{g \circ x}$ like on the picture:
$$\bfig
\node a1(400, 1000)[X]
\node daa(800, 1000)[A]
\node da(1200, 1000)[B]
%\node a(1800, 1000)[A]
\node f(1600, 1000)[f \circ x]
\node g(2000, 1000)[g \circ x]
\arrow/->/[a1`daa;x]
\arrow|r|/{@{>}@/_1em/}/[daa`da;g]
\arrow/{@{>}@/^1em/}/[daa`da;f]
%\arrow/->/[da`a;\epsilon_A]
\arrow/=>/[f`g;\alpha]
\efig$$
We call a pair $\langle\mor{x}{X}{A}, \mor{\alpha}{f \circ x}{g \circ x} \rangle$ an ``inserter cone over $X$''. A morphism between parallel natural transformations $W(-) \rightarrow \hom_\catl{W}(X, F(-))$ is a \emph{modification}. If $\langle\mor{x}{X}{A}, \mor{\alpha}{f \circ x}{g \circ x} \rangle$ and $\langle\mor{x'}{X}{A}, \mor{\alpha'}{f \circ x'}{g \circ x'} \rangle$ are two inserter cones over $X$ induced by natural transformations $W(-) \rightarrow \hom_\catl{W}(X, F(-))$, then a modification between the natural transformations corresponds to a single $2$-morphism $\mor{\gamma}{x}{x'}$ in $\catl{W}$ such that $(g \circ \gamma) \bullet \alpha = \alpha' \bullet (f \circ \gamma)$. Therefore, we may write $\mathit{Inserter}(X; f, g)$ for the category of inserter cones over X of the shape of $F$, which is isomorphic to $\hom(W(-), \hom_\catl{W}(X, F(-)))$. Then, the assignment $X \mapsto \mathit{Inserter}(X; f, g)$ extends by composition to a functor:
$$\mor{\mathit{Inserter}(-; f, g)}{\catl{W}^{op}}{\onecat}$$
The inserter $I$ of $f, g$ is a 2-representation
$$\mor{\hom_\catl{W}(-, I)}{\catl{W}^{op}}{\onecat}$$
of $\mathit{Inserter}(-; f, g)$. That is, the inserter is an object $I$ together with a morphism $\mor{i}{I}{A}$ and a 2-morphism $\mor{\pi}{f \circ i}{g \circ i}$ that is universal in the following sense:
$$\bfig

\node a1(200, 1000)[I]
\node daa(800, 1000)[A]
\node da(1400, 1000)[B]
%\node a(1800, 1000)[A]
\node f(1800, 1200)[f \circ i]
\node g(2200, 1200)[g \circ i]

\node f2(1800, 1000)[f \circ x]
\node g2(2200, 1000)[g \circ x]

\node a0(400, 1300)[X]

\arrow/->/[a1`daa;i]
\arrow|r|/{@{>}@/_1em/}/[daa`da;g]
\arrow/{@{>}@/^1em/}/[daa`da;f]
%\arrow/->/[da`a;\epsilon_A]
\arrow/=>/[f`g;\pi]
\arrow/=>/[f2`g2;\alpha]

\arrow/->/[a0`daa;x]
\arrow/-->/[a0`a1;h_x]
\efig$$
for every diagram $\langle\mor{x}{X}{A}, \mor{\alpha}{f \circ x}{g \circ x} \rangle$ there exists a unique morphism $\mor{h_x}{X}{I}$ such that $i \circ h_x = x$ and $\pi \circ h_x = \alpha$; and for every diagram $\langle\mor{x'}{X}{A}, \mor{\alpha'}{f \circ x'}{g \circ x'} \rangle$ and a 2-morphism $\mor{\gamma}{x}{x'}$ that is a morphism of diagrams, i.e.~$(g \circ \gamma) \bullet \alpha = \alpha' \bullet (f \circ \gamma)$, there exists a unique 2-morphism $\mor{h_{\gamma}}{h_x}{h_{x'}}$ such that $i \circ  h_{\gamma} = \gamma$. 

By the above characterisation, we instantly get the following corollary.
\begin{corollary}\label{c:dicrete:inserter}
In any 2-category a morphism $\mor{i}{I}{A}$ of an inserter $\tuple{I, \mor{i}{I}{A}}$ is discrete --- i.e.~it is representably faithful and conservative, which means that for every object $X$ the functor $\hom(X, i)$ is faithful and conservative.
\end{corollary}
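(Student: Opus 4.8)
The plan is to transport the question along the defining natural isomorphism $\hom_\catl{W}(X, I) \approx \mathit{Inserter}(X; f, g)$ and then argue entirely inside the explicitly described category $\mathit{Inserter}(X; f, g)$. Under this isomorphism the universal cone $\tuple{I, \mor{i}{I}{A}, \pi}$ sends a morphism $\mor{h_x}{X}{I}$ to the inserter cone $\tuple{\mor{x}{X}{A}, \mor{\alpha}{f \circ x}{g \circ x}}$ with $x = i \circ h_x$ and $\alpha = \pi \circ h_x$, and sends a $2$-cell $h_\gamma$ to its underlying $2$-cell $\gamma = i \circ h_\gamma$. Hence, up to this isomorphism, the functor $\hom(X, i)$ is exactly the forgetful functor $\mor{U}{\mathit{Inserter}(X; f, g)}{\hom(X, A)}$ that erases the $2$-cell $\alpha$ from an object and keeps the underlying $2$-cell of a morphism. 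So it suffices to prove that $U$ is faithful and conservative for every $X$.

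Faithfulness is immediate. A morphism $\tuple{x, \alpha} \to \tuple{x', \alpha'}$ in $\mathit{Inserter}(X; f, g)$ is by definition a single $2$-cell $\mor{\gamma}{x}{x'}$ in $\hom(X, A)$ subject to the compatibility equation $(g \circ \gamma) \bullet \alpha = \alpha' \bullet (f \circ \gamma)$, and $U$ returns precisely this $\gamma$. Two morphisms of inserter cones are therefore identified by $U$ only if their underlying $2$-cells coincide, so $U$ is injective on hom-sets.

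The genuine content is conservativity, and this is the step I expect to require the only real computation. Suppose $\mor{\gamma}{\tuple{x, \alpha}}{\tuple{x', \alpha'}}$ is a morphism of inserter cones whose image $U(\gamma) = \gamma$ is invertible in $\hom(X, A)$, with vertical inverse $\mor{\gamma^{-1}}{x'}{x}$. Since whiskering by $f$ and by $g$ are functors $\hom(X, A) \to \hom(X, B)$, the $2$-cells $f \circ \gamma$ and $g \circ \gamma$ are invertible, with inverses $f \circ \gamma^{-1}$ and $g \circ \gamma^{-1}$. Conjugating the compatibility equation for $\gamma$ by the inverse whiskerings $g \circ \gamma^{-1}$ and $f \circ \gamma^{-1}$, functoriality of whiskering collapses $(g \circ \gamma^{-1}) \bullet (g \circ \gamma) = g \circ \id{x}$ and $(f \circ \gamma) \bullet (f \circ \gamma^{-1}) = f \circ \id{x'}$, yielding exactly $\alpha \bullet (f \circ \gamma^{-1}) = (g \circ \gamma^{-1}) \bullet \alpha'$. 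This is the compatibility equation witnessing that $\gamma^{-1}$ is itself a morphism $\tuple{x', \alpha'} \to \tuple{x, \alpha}$ in $\mathit{Inserter}(X; f, g)$. As vertical composition in the inserter category is inherited from $\hom(X, A)$, the $2$-cell $\gamma^{-1}$ is a two-sided inverse of $\gamma$ there, so $\gamma$ is an isomorphism. Thus $U$ reflects isomorphisms, and since $X$ was arbitrary, $i$ is representably faithful and conservative, i.e.~discrete.
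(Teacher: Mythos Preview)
Your argument is correct and is precisely the unpacking of what the paper leaves implicit: the paper gives no proof beyond the remark that the corollary follows ``instantly'' from the explicit description of inserter cones and their morphisms, and your use of the forgetful functor $U\colon \mathit{Inserter}(X;f,g)\to\hom(X,A)$ is exactly that description. The faithfulness and conservativity verifications you give are the standard ones and match the intended reasoning.
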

If $\disc(\catl{W})$ gives the canonical notion of discreteness on a finitely complete 2-category $\catl{W}$, then with every object $A \in \catl{W}$ we may associate a $\disc(\catl{W})$-internal category $\catl{A}$. %Let us recall \cite{ehr}\cite{bicat} the definition of an internal category.
%\begin{definition}[Internal category]\label{d:internal:category}
%Let $\catl{C}$ be a category with pullbacks. Then, there exists a bicategory $\mathit{Span}(\catl{C})$ of spans in $\catl{C}$ with the same objects as in $\catl{C}$, with 1-morphisms from $A$ to $B$ consisting of pairs of arrows $A \leftarrow X \rightarrow B$ in $\catl{C}$ with (weak) compositions given by pullbacks, and with 2-morphisms $A \overset{\gamma_0}{\leftarrow} X \overset{\gamma_1}{\rightarrow} B \Rightarrow A \overset{\gamma_0'}{\leftarrow} Y \overset{\gamma_1'}{\rightarrow} B$ consisting of a single morphism $\mor{f}{X}{Y}$ from $\catl{C}$ such that $\tuple{\gamma_0' \circ f, \gamma_1' \circ f} = \tuple{\gamma_0, \gamma_1}$. A category internal to $\catl{C}$ is a monad in $\mathit{Span}(\catl{C})$; that is, it consists of a 1-morphism $A = (A_0 \overset{\mathit{dom}}{\leftarrow} A_1 \overset{\mathit{cod}}{\rightarrow} A_0)$, together with a unit $\eta \colon \mathit{id}_A \Rightarrow A$ and a multiplication $\mu \colon A \circ A \Rightarrow A$ subject to the monoidal laws. 
%\end{definition}
Given $A \in \catl{W}$ we define the ``object of objects'' $\catl{A}_0$ as $|A|$. Then we shall define the ``object of morphisms'' $\catl{A}_1$ as the inserter of the following diagram (notice that $|A \times A| \approx |A| \times |A|$ since $|{-}|$ is right adjoint): 
$$\bfig
\node a1(0, 1000)[\catl{A}_1]
\node daa(800, 1000)[|A \times A|]
\node da(1400, 1000)[|A|]
\node a(1800, 1000)[A]
\arrow/->/[a1`daa;\langle\mathit{dom}, \mathit{cod}\rangle]
\arrow|r|/{@{>}@/_1em/}/[daa`da;\pi_{1_{|A|}}]
\arrow/{@{>}@/^1em/}/[daa`da;\pi_{0_{|A|}}]
\arrow/->/[da`a;\epsilon_A]
\efig$$
together with the ``choosing'' 2-morphism:
${\mor{\alpha}{\epsilon_A \circ \mathit{dom}}{\epsilon_A \circ \mathit{cod}}}$.
We have to show that $\catl{A}_1$ is discrete. However this is a straightforward consequence of Corollary \ref{c:dicrete:inserter}.
\begin{corollary}
An inserter  $\tuple{I, \mor{i}{I}{A}}$  on a discrete object $A$ is a discrete object.
\end{corollary}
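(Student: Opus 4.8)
The plan is to unwind the definition of a discrete object and to exploit the explicit description of inserter cones recorded just before Corollary \ref{c:dicrete:inserter}. By the canonical notion of discreteness (Definition \ref{d:discreteness:canonical}), proving that $I \in \disc(\catl{W})$ amounts to showing that the category $\hom(X, I)$ is discrete --- i.e.\ has only identity $2$-morphisms --- for every object $X \in \catl{W}$. So I would fix $X$ and analyse $\hom(X, I)$ through the natural isomorphism $\hom(X, I) \approx \mathit{Inserter}(X; f, g)$, where $f, g \colon A \to B$ is the parallel pair whose inserter is $\tuple{I, \mor{i}{I}{A}}$.

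First I would recall the shape of the inserter category. An object of $\mathit{Inserter}(X; f, g)$ is a cone $\tuple{\mor{x}{X}{A}, \mor{\alpha}{f \circ x}{g \circ x}}$, and a morphism $\tuple{x, \alpha} \to \tuple{x', \alpha'}$ is a single $2$-morphism $\mor{\gamma}{x}{x'}$ in $\catl{W}$ subject to the compatibility condition $(g \circ \gamma) \bullet \alpha = \alpha' \bullet (f \circ \gamma)$. Under the isomorphism the functor $\hom(X, i)$ acts by $\tuple{x, \alpha} \mapsto x$ on cones and by $\gamma \mapsto \gamma$ on cone morphisms, so the underlying datum $\gamma$ of any morphism of $\hom(X, I)$ is exactly a $2$-morphism of $\hom(X, A)$.

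The key step is to feed in the hypothesis that $A$ is discrete. Then $\hom(X, A)$ is a discrete category, so the only available $2$-morphisms $\mor{\gamma}{x}{x'}$ are identities; any such $\gamma$ therefore forces $x = x'$ and $\gamma = \id{x}$. Substituting $\gamma = \id{x}$ into the compatibility condition collapses it to $\alpha = \alpha'$, since $f \circ \id{x}$ and $g \circ \id{x}$ are the identity $2$-cells on $f \circ x$ and $g \circ x$. Hence a morphism $\tuple{x, \alpha} \to \tuple{x', \alpha'}$ can exist only when $x = x'$ and $\alpha = \alpha'$, and in that case it is the identity. So $\hom(X, I) \approx \mathit{Inserter}(X; f, g)$ has only identity $2$-morphisms and is discrete; as $X$ was arbitrary, $I$ is a discrete object.

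I expect the main subtlety --- and the reason the statement is genuinely about inserters rather than a purely formal consequence of the bare assertion of Corollary \ref{c:dicrete:inserter} --- to be the following. Representable faithfulness and conservativity of $i$ alone only guarantee that each $\hom(X, I)$ is a \emph{thin groupoid}: faithfulness into a discrete category bounds every hom-set by a single element, and conservativity makes each surviving $2$-morphism invertible, but this is strictly weaker than discreteness (the walking isomorphism maps faithfully and conservatively onto the terminal category without being discrete). What rules out a non-identity isomorphism between two distinct cones $\tuple{x, \alpha}, \tuple{x, \alpha'}$ is precisely the modification equation $(g \circ \gamma) \bullet \alpha = \alpha' \bullet (f \circ \gamma)$ of the inserter, which forces $\alpha = \alpha'$ once discreteness of the base pins $\gamma$ to an identity, and which has no counterpart in the faithful-plus-conservative packaging. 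Thus the essential point to keep is the compatibility condition on cone morphisms, together with the observation that discreteness of $A$ makes it equate the inserting $2$-cells.
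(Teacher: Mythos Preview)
Your argument is correct. The paper gives no detailed proof of this corollary: it merely asserts, in the sentence preceding the statement, that it is ``a straightforward consequence of Corollary~\ref{c:dicrete:inserter}'', and leaves it at that. Your route---unpacking the isomorphism $\hom(X,I)\approx\mathit{Inserter}(X;f,g)$ and reading off that a cone morphism is literally a $2$-cell $\gamma$ in $\hom(X,A)$ subject to the modification equation---is exactly the computation the paper's explicit description of inserters sets up, so in spirit the two agree.

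Where you go beyond the paper is in your final paragraph, and the point you raise there is well taken. The bare conclusion of Corollary~\ref{c:dicrete:inserter} (that $\hom(X,i)$ is faithful and conservative) only forces $\hom(X,I)$ to be a thin groupoid when the codomain is discrete; your walking-isomorphism example shows this need not be a discrete category. What actually pins the morphisms of $\hom(X,I)$ down to identities is the modification equation $(g\circ\gamma)\bullet\alpha=\alpha'\bullet(f\circ\gamma)$, which collapses to $\alpha=\alpha'$ once discreteness of $A$ forces $\gamma=\id{x}$. So strictly speaking the corollary follows not from the \emph{statement} of Corollary~\ref{c:dicrete:inserter} but from the same explicit inserter description used to prove it; the paper's one-line justification elides this, and your write-up makes the dependence precise.
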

The internal identity ${\mor{\eta_\catl{A}}{\catl{A}_0}{\catl{A}_1}}$ is given 
as the unique morphism to the inserter induced by the identity 2-morphism on:
$$\bfig
\node a1(0, 1000)[\catl{A}_1]
\node daa(800, 1000)[|A \times A|]
\node da(1400, 1000)[|A|]
\node a(1800, 1000)[A]
\node a0(400, 1350)[\catl{A}_0]

\arrow/->/[a1`daa;\langle\mathit{dom}, \mathit{cod}\rangle]
\arrow|r|/{@{>}@/_1em/}/[daa`da;\pi_{1_{|A|}}]
\arrow/{@{>}@/^1em/}/[daa`da;\pi_{0_{|A|}}]
\arrow/->/[da`a;\epsilon_A]
\arrow/->/[a0`daa;|\Delta_A|]
\arrow/-->/[a0`a1;\eta_{\catl{A}}]
\efig$$
To define the internal composition, let us first form the pullback:
$$\bfig
\node a0(1000, 1000)[\catl{A}_0]
\node da1(1000, 1250)[\catl{A}_1]
\node ca1(600, 1000)[\catl{A}_1]
\node a2(600, 1250)[\catl{A}_2]
\arrow|r|/->/[da1`a0;\mathit{dom}]
\arrow|r|/->/[ca1`a0;\mathit{cod}]
\arrow/->/[a2`da1;p_1]
\arrow/->/[a2`ca1;p_2]
\efig$$
and take the composition $\mor{\mu_{\catl{A}}}{\catl{A}_2}{\catl{A}_1}$ to be the unique morphism to the inserter induced by the 2-morphism:
$$\mor{\alpha p_2 \bullet \alpha p_1}{\epsilon_A \circ \mathit{dom} \circ p_1}{\epsilon_A \circ \mathit{cod} \circ p_2}$$
of the diagram:
$$\bfig
\node a1(0, 1000)[\catl{A}_1]
\node daa(800, 1000)[|A \times A|]
\node da(1400, 1000)[|A|]
\node a(1800, 1000)[A]
\node a2(300, 1350)[\catl{A}_2]

\arrow/->/[a1`daa;\langle\mathit{dom}, \mathit{cod}\rangle]
\arrow|r|/{@{>}@/_1em/}/[daa`da;\pi_{1_{|A|}}]
\arrow/{@{>}@/^1em/}/[daa`da;\pi_{0_{|A|}}]
\arrow/->/[da`a;\epsilon_A]
\arrow/->/[a2`daa;\langle\mathit{dom} \circ p_1, \mathit{cod} \circ p_2\rangle]
\arrow/-->/[a2`a1;\mu_{\catl{A}}]
\efig$$
\begin{definition}[Canonically associated category]\label{d:can:associated:category}
Let $\catl{W}$ be a finitely complete 2-category with a canonical notion of discreteness. With the notation as above, we define an associated category to an object $A \in \catl{W}$ to be the $\mathit{Disc}(\catl{W})$-internal category $\catl{A} = \langle \catl{A}_0, \catl{A}_1, 
\xy \morphism(0,0)|a|/@{>}@<3pt>/<400,0>[\catl{A}_1`\catl{A}_0;\mathit{dom}] \morphism(0,0)|b|/@{>}@<-3pt>/<400,0>[\catl{A}_1`\catl{A}_0;\mathit{cod}]\endxy 
,\catl{A}_0 \to^{\eta_\catl{A}} \catl{A}_1, \catl{A}_2 \to^{\mu_{\catl{A}}} \catl{A}_1 \rangle$.
\end{definition}
Similarly, every morphism $\mor{f}{A}{B}$ induces an internal functor $\mor{F}{\catl{A}}{\catl{B}}$, and every 2-morphism $\mor{\tau}{f}{g}$ induces an internal natural transformation $\mor{\tau}{F}{G}$ between internal functors induced by $f$ and $g$. This gives a 2-functor $\mor{E}{\catl{W}}{\cat(\mathit{Disc}(\catl{W}))}$. We shall see that
$\disc(\catl{W})$ is a 2-dense subcategory of $\catl{W}$ iff ${\mor{E}{\catl{W}}{\cat(\disc(\catl{W}))}}$ is a fully faithful embedding. One then instantly gets the following: the 2-functor $\mor{E}{\cat(\catl{C})}{\cat(\disc(\cat(\catl{C})))}$ is a 2-equivalence of 2-categories for any category $\catl{C}$ with pullbacks.
\begin{corollary}\label{c:posetal}
A category $\catl{A}$ internal to a finitely complete category $\catl{C}$ is representably posetal iff it is an internal poset.
\end{corollary}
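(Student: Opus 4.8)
The plan is to reduce both implications to one elementary fact: the morphism $\mor{\langle\mathit{dom},\mathit{cod}\rangle}{\catl{A}_1}{\catl{A}_0\times\catl{A}_0}$ is a monomorphism exactly when lifts along it are unique. The bridge is the concrete description of a $2$-cell in $\cat(\catl{C})$: an internal natural transformation $\tau$ between internal functors $\mor{F,G}{X}{\catl{A}}$ is a single morphism $\mor{\tau}{X_0}{\catl{A}_1}$ in $\catl{C}$ subject to the boundary equations $\mathit{dom}\circ\tau = F_0$ and $\mathit{cod}\circ\tau = G_0$ together with a naturality equation. The boundary equations combine into $\langle\mathit{dom},\mathit{cod}\rangle\circ\tau = \langle F_0,G_0\rangle$, so $\tau$ is precisely a lift of the fixed morphism $\langle F_0,G_0\rangle$ along $\langle\mathit{dom},\mathit{cod}\rangle$.

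For the direction \emph{internal poset $\Rightarrow$ representably posetal}, I would fix an arbitrary internal category $X$ and a parallel pair $\mor{F,G}{X}{\catl{A}}$ and show there is at most one $2$-cell $F\Rightarrow G$. If $\tau,\tau'$ are two such, then $\langle\mathit{dom},\mathit{cod}\rangle\circ\tau = \langle F_0,G_0\rangle = \langle\mathit{dom},\mathit{cod}\rangle\circ\tau'$, whence $\tau=\tau'$ by the assumed monicity. The crucial point is that the naturality equation plays no part in this uniqueness argument --- it constrains only the \emph{existence} of a $2$-cell --- so the conclusion holds for \emph{every} $X$, which is exactly representable posetality.

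For the converse \emph{representably posetal $\Rightarrow$ internal poset}, I would test against discrete objects, which in $\cat(\catl{C})$ are exactly the objects $d$ of $\catl{C}$. For such an $X$ the functors $F,G$ amount to morphisms $\mor{f,g}{d}{\catl{A}_0}$, and since a discrete internal category carries no non-identity morphisms the naturality equation is vacuous; hence a $2$-cell $F\Rightarrow G$ is nothing but a morphism $\mor{h}{d}{\catl{A}_1}$ with $\langle\mathit{dom},\mathit{cod}\rangle\circ h = \langle f,g\rangle$. Representable posetality forces at most one such $h$ for each $d$ and each pair $f,g$; unwinding, any two morphisms $d\to\catl{A}_1$ with the same image under $\langle\mathit{dom},\mathit{cod}\rangle$ coincide, which is precisely the statement that $\langle\mathit{dom},\mathit{cod}\rangle$ is mono. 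I expect no genuine obstacle here; the only point needing care is the observation that naturality is automatic on a discrete test object, so that the hom-poset there really computes lifts along $\langle\mathit{dom},\mathit{cod}\rangle$ and nothing more.
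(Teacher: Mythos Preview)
Your proposal is correct and follows essentially the same approach as the paper: both test monicity of $\langle\mathit{dom},\mathit{cod}\rangle$ against discrete internal categories (where naturality is vacuous) for one direction, and use that monicity forces uniqueness of the component morphism of any internal natural transformation (regardless of the source category) for the other. Your write-up is in fact a bit more explicit than the paper's about why naturality drops out in each direction, but the underlying argument is identical.
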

\begin{proof}%
%According to above, $\catl{A}_1$ is the inserter of the diagram:
%$$\bfig
%\node a1(0, 1000)[\catl{A}_1]
%\node daa(800, 1000)[\catl{A}_0 \times \catl{A}_0]
%\node da(1400, 1000)[\catl{A}_0]
%\node a(1800, 1000)[\catl{A}]
%\node a2(300, 1350)[\catl{A}_2]
%
%\arrow/->/[a1`daa;\langle\mathit{dom}, \mathit{cod}\rangle]
%\arrow|r|/{@{>}@/_1em/}/[daa`da;\pi_{1}]
%\arrow/{@{>}@/^1em/}/[daa`da;\pi_{0}]
%\arrow/->/[da`a;]
%\arrow/->/[a2`daa;\langle\mathit{dom} \circ p_1, \mathit{cod} \circ p_2\rangle]
%\arrow/-->/[a2`a1;\mu_{\catl{A}}]
%\efig$$
Let $X$ be an object in $\catl{C}$. We shall think of $X$ as a discrete $\catl{C}$-internal category. An internal functor $\mor{f}{X}{\catl{A}}$ is tantamount to a single morphism $\mor{f}{X}{\catl{A}_0}$ in $\catl{C}$. An internal natural transformation between such functors $\mor{f, g}{X}{\catl{A}}$ consists of a morphism $\mor{\tau}{X}{\catl{A}_1}$ satisfying $\tuple{\mathit{dom} \circ \tau, \mathit{cod} \circ \tau} = \tuple{f, g}$. Therefore $\tuple{\mathit{dom}, \mathit{cod}}$ is mono precisely when over any $\tuple{f, g}$ there is at most one internal natural transformation. On the other hand if $\tuple{\mathit{dom}, \mathit{cod}}$ is mono, the condition $\tuple{\mathit{dom} \circ \tau, \mathit{cod} \circ \tau} = \tuple{f_0, g_0}$ ensures that $\hom(\catl{X}, \catl{A})$ is posetal for any $\catl{C}$-internal category $\catl{X}$.
\end{proof}
There is also a construction in the other direction $\mor{I}{\cat(\mathit{Disc}(\catl{W}))}{\catl{W}}$, provided $\catl{W}$ has enough (weighted) colimits. But first, let us recall the definition of a family fibration from Chapter 7.3 of \cite{jacobs}.
\begin{definition}[Externalisation of a category]\label{d:externalisation}
For every category $\catl{A}$ internal to a finitely complete category $\catl{C}$ one may construct a split indexed category (the externalisation of a category):
${\mor{\mathit{fam}(\catl{A})}{\catl{C}^{op}}{\onecat}}$
as follows:
\begin{itemize}
\item $\mathit{fam}(\catl{A})(X)$ is the category whose objects are morphisms $X \rightarrow \catl{A}_0$ in $\catl{C}$, whose morphisms from an object $\mor{x}{X}{\catl{A}_0}$ to an object $\mor{y}{X}{\catl{A}_0}$ are morphisms $\mor{f}{X}{\catl{A}_1}$ in $\catl{C}$ such that $\langle \mathit{dom}, \mathit{cod} \rangle \circ f = \langle x, y \rangle$ and with the identities and compositions inherited from $\catl{A}$
\item for a morphism $\mor{f}{X}{Y}$ the functor $\mathit{fam}(\catl{A})(f) = (-) \circ f$ is the post-composition with $f$.
\end{itemize}
%Abusing notation a bit, we shall also write ${\mor{\mathit{fam}(\catl{C})}{\catl{C}^{op}}{\onecat}}$ for the split indexed category associated to the fundamental indexing over $\catl{C}$ via the fibred Yoneda lemma (Appendix \ref{s:fibrations} Theorem \ref{t:fibred:yoneda}). In more detail, it is defined as:
%$$\mathit{fam}(\catl{C})(X) = \psfn(\hom_\catl{C}(-, X), \catl{C}^*/(-))$$
%where $\mor{\catl{C}^*/(\mor{f}{X}{Y})}{\catl{C}/Y}{\catl{C}/X}$ is the pullback functor along $\mor{f}{X}{Y}$ (i.e.~$\catl{C}^*/(-)$ is a component-wise right-adjoint to the usual slice functor).
%Moreover, this construction extends to the 2-functor $\onecat(\catl{C}) \rightarrow \onecat^{\catl{C}^{op}}$ making $\onecat(\catl{C})$ a full 2-subcategory of $\onecat^{\catl{C}^{op}}$. 
\end{definition}
Let $\mor{F}{\disc(\catl{W})}{\catl{W}}$ be the inclusion from the category of discrete objects. Consider a $\disc(\catl{W})$-internal category $\catl{A}$ together with its externalisation\newline 
${\mor{\mathit{fam}(\catl{A})}{\disc(\catl{W})^{op}}{\onecat}}$.
The corresponding object $I(\catl{A}) \in \catl{W}$, if it exists, is the colimit of $F$ weighted by $\mathit{fam}(\catl{A})$. Therefore, if $\catl{W}$ has enough (weighted) colimits then there exists a 2-functor $\mor{I}{\cat(\disc(\catl{W}))}{\catl{W}}$, which is left adjoint to $\mor{E}{\catl{W}}{\cat(\mathit{Disc}(\catl{W}))}$. %Now, we may transport constructions from internal categories to a 2-category $\catl{W}$ via $E$ followed by $I$.
%
%\begin{example}[Dual object]\label{e:dual:object}
%Let $\catl{W}$ be a finitely complete category with a canonical notion of discreteness. With the notation as above, we define the dual $A^{op}$ of an object $A \in \catl{W}$ to be the object $I(E(A)^{op})$ provided it exists.
%\end{example}

Instead of directly proving the above facts, we generalise the construction of an associated category to any notion of discreteness and prove more general theorems. Let us first generalise the construction of family fibration from Definition \ref{d:externalisation}. Because fibrations are equivalent to indexed categories, we use these concepts interchangeably.
\begin{definition}[Generalised family fibration]\label{d:family:fibration}
Let $\mor{F}{\catl{C}}{\catl{W}}$ be a functor from a 1-category to a 2-category. Every object $A \in \catl{W}$ induces a split indexed category:
$\mor{\hom(F(-), A)}{\catl{C}^{op}}{\onecat}$,
which we shall call ``family fibration'' and denote by $\mathit{fam}_F(A)$.  
\end{definition}
\begin{example}[Canonical family fibration]\label{e:canonical:family}
Let $\catl{A}$ be a $\catl{C}$-internal category. Its externalisation
${\mor{\mathit{fam}(\catl{A})}{\catl{C}^{op}}{\onecat}}$
coincides with the family fibration in the sense of Definition~\ref{d:externalisation}:
$\mor{\mathit{fam}_F(\catl{A})}{\disc(\cat(\catl{C}))^{op}}{\onecat}$ 
where ${\disc(\cat(\catl{C})) \approx \catl{C}}$
and $\mor{F}{\catl{C}}{\onecat(\catl{C})}$ gives the canonical notion of discreteness. More generally, if $\catl{A}$ is a category relative to a monoidal fibration \cite{gag} \cite{mrp} \cite{mike}, then its externalisation as defined in Chapter 1.5 of \cite{mrp} also coincides with the family fibration.
\end{example}
%\begin{example}[Underlying family fibration]\label{e:underlying:family}
%Let $\catl{A}$ be a category enriched over a monoidal category $\catl{V}$. 
%\end{example}
% write anout enriched categories
%
The assignment $A \mapsto \mathit{fam}_F(A)$
extends to a 2-functor: $\mor{\mathit{fam}_F}{\catl{W}}{\onecat^{\catl{C}^{op}}}$ which will be called ``the family functor''.
We shall recall the definitions of a generic object, locally small, and small indexed category \cite{borceux} \cite{elephant} \cite{jacobs} \cite{phoa}.
\begin{definition}[Generic object]\label{d:generic:object}
A split indexed category ${\mor{\Theta}{\catl{C}^{op}}{\onecat}}$ has a generic object $\Omega \in \catl{C}$ if its underlying discrete indexed category:
$${\catl{C}^{op} \to^{\Theta} \onecat \to^{|{-}|} \catw{Set}}$$
is represented by:
$$\hom_{\catl{C}}(-, \Omega)$$
\end{definition}
\begin{definition}[Locall smallness]\label{d:locally:small}
A split indexed category $\mor{\Theta}{\catl{C}^{op}}{\onecat}$ is locally small if for every object $I \in \catl{C}$ and every pair of objects $X, Y \in \Theta(I)$ there exists an object $\underline{\hom(X, Y)} \in \catl{C}$ together with a morphism $\mor{p}{\underline{\hom(X, Y)}}{I}$, and a vertical morphism $\mor{\chi}{\Theta(p)(X)}{\Theta(p)(Y)}$ over $\underline{\hom(X, Y)}$ such that for any morphism $\mor{q}{J}{I} \in \catl{C}$ and any vertical morphism $\mor{\beta}{\Theta(q)(X)}{\Theta(q)(Y)}$ over $J$ there exists a unique morphism $\mor{h}{J}{\underline{\hom(X, Y)}}$ such that $p \circ h = q$ and $\Theta(h)(\chi) = \beta$.
\end{definition}
\begin{definition}[Smallness]\label{d:small}
A split indexed category is small if it has a generic object and is locally small.
\end{definition}
It is well-known that (split) small categories indexed over a category $\catl{C}$ with finite limits are equivalent to $\catl{C}$-internal categories (Proposition 7.3.8 in \cite{jacobs}). We show that if $\catl{C}$ is a coreflective subcategory of a finitely complete 2-category $\catl{W}$, then $\catl{C}$-indexed family fibrations of $\catl{W}$ are small, thus have associated $\catl{C}$-internal categories.
\begin{theorem}\label{t:famgeneric}
An indexed category $\mathit{fam}_F(A)$ has a generic object iff $F$ has a \mbox{(1-)right} adjoint. 
\end{theorem}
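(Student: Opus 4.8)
The plan is to observe that the defining condition for a generic object of $\mathit{fam}_F(A)$ is exactly pointwise representability of the associated $\catw{Set}$-valued functor, and then to read off the equivalence with the existence of a right adjoint from the standard pointwise criterion for adjoints. First I would unwind the definitions: since the discretisation $\mor{|{-}|}{\onecat}{\catw{Set}}$ sends a hom-category to its set of objects, the underlying discrete indexed category of $\mathit{fam}_F(A) = \hom_{\catl{W}}(F(-), A)$ is the functor $C \mapsto |\hom_{\catl{W}}(F(C), A)|$, which is precisely the hom-set $\hom_{\catl{W}_1}(F(C), A)$ of $1$-morphisms in the underlying $1$-category $\catl{W}_1$ of $\catl{W}$. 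Thus $\mathit{fam}_F(A)$ has a generic object $\Omega$ if and only if the functor $\mor{\hom_{\catl{W}_1}(F(-), A)}{\op{\catl{C}}}{\catw{Set}}$ is represented by $\Omega$. Note that only $1$-morphisms enter here --- the $2$-cells tracked by the full hom-category are discarded by $|{-}|$ --- which is exactly why the adjoint in question is a $1$-adjoint between the underlying $1$-categories.

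For the direction ($\Leftarrow$), suppose $F$ has a right $1$-adjoint $\mor{G}{\catl{W}}{\catl{C}}$. The adjunction supplies a natural isomorphism $\hom_{\catl{W}_1}(F(C), A) \approx \hom_{\catl{C}}(C, G(A))$, so $G(A)$ represents $\hom_{\catl{W}_1}(F(-), A)$ and is therefore a generic object for $\mathit{fam}_F(A)$.

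For the direction ($\Rightarrow$), I would read the statement with its $A$ universally quantified: assume that $\mathit{fam}_F(A)$ has a generic object $\Omega_A$ for every $A \in \catl{W}$. By the first step this gives, for each $A$, a representing isomorphism $\hom_{\catl{C}}(-, \Omega_A) \approx \hom_{\catl{W}_1}(F(-), A)$. This is precisely the hypothesis of the pointwise criterion for the existence of a right adjoint: setting $G(A) = \Omega_A$ and extending $G$ to $1$-morphisms of $\catl{W}$ by transporting post-composition along the representing isomorphisms (i.e.\ via the Yoneda lemma), one checks that the representing bijections are natural in $A$ as well, yielding $F \dashv G$.

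The main obstacle --- really the only place where care is needed --- is this universal quantification over $A$ in the forward direction. A generic object for a single $A$ records only pointwise representability at that $A$, whereas a right adjoint packages coherently representing objects for all $A$ at once; so the crux is to verify that the family $A \mapsto \Omega_A$ assembles into a functor and that the induced bijections are natural in the second variable, which is exactly the content of the standard construction of an adjoint from objectwise representability. Everything else is a direct translation of the definitions of generic object and of $1$-adjunction.
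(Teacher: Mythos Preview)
Your proposal is correct and follows essentially the same route as the paper: both arguments reduce the claim to the observation that a generic object for $\mathit{fam}_F(A)$ is precisely a representation of the $\catw{Set}$-valued functor $\hom_{\catl{W}}(F(-),A)$, which for all $A$ is exactly the defining natural isomorphism of a right $1$-adjoint. You are more explicit than the paper about the implicit universal quantification over $A$ in the forward direction and about invoking the pointwise criterion for adjoints, whereas the paper compresses this into a single sentence calling the theorem ``almost tautological.''
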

\begin{proof}
The theorem is almost tautological. The definition of an adjunction says that for every $A$ there is a natural isomorphism:
$\hom(F(-), A) \approx \hom(-, U(A))$
between $\catw{Set}$-valued functors, but this is exactly the definition of a generic object $\Omega = U(A)$.
\end{proof}
\begin{theorem}
If $\catl{W}$ has (weighted) finite limits and an adjunction $\xy \morphism(0,0)|a|/@{>}@<3pt>/<400,0>[\catl{W}`\catl{C};U] \morphism(0,0)|b|/@{<-}@<-3pt>/<400,0>[\catl{W}`\catl{C};F]\endxy$ makes $\catl{C}$ a coreflective subcategory of $\catl{W}$, then for every object $A \in \catl{W}$ family fibration $\mathit{fam}_F(A)$ is locally small.
\end{theorem}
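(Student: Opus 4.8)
The plan is to build the hom-object $\underline{\hom(X,Y)}$ demanded by Definition \ref{d:locally:small} as the discretisation (image under $U$) of an inserter formed in $\catl{W}$. Throughout write $\Phi = \mathit{fam}_F(A) = \hom(F(-), A)$, and recall that, $\catl{C}$ being coreflective via $F \dashv U$, the functor $F$ is (2-)fully faithful and the unit $\eta$ is an isomorphism, while $\mor{\epsilon}{FU}{\id{\catl{W}}}$ denotes the counit. Fix $I \in \catl{C}$ and two objects $X, Y \in \Phi(I) = \hom(F(I), A)$; by definition these are parallel 1-morphisms $\mor{X, Y}{F(I)}{A}$ in $\catl{W}$. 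Since inserters are among the finite weighted limits (Definition \ref{d:inserter}) and $\catl{W}$ has all of these by hypothesis, I would form the inserter $\tuple{\mathit{Ins}(X,Y), \mor{i}{\mathit{Ins}(X,Y)}{F(I)}, \mor{\pi}{X \circ i}{Y \circ i}}$, universal among cones $\tuple{\mor{x}{W}{F(I)}, \mor{\alpha}{X \circ x}{Y \circ x}}$, and then set $\underline{\hom(X,Y)} := U(\mathit{Ins}(X,Y)) \in \catl{C}$.

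The structure maps come from the counit $\mor{\epsilon}{FU(\mathit{Ins}(X,Y))}{\mathit{Ins}(X,Y)}$. Because $F$ is fully faithful, the composite $i \circ \epsilon \colon FU(\mathit{Ins}(X,Y)) \to F(I)$ is of the form $F(p)$ for a unique $\mor{p}{\underline{\hom(X,Y)}}{I}$ in $\catl{C}$ (concretely $p = \eta_I^{-1} \circ U(i)$, and $F(p) = i \circ \epsilon$ follows from naturality of $\epsilon$ together with a triangle identity). I take $\chi := \pi \circ \epsilon$, the whiskering of the universal 2-cell, which is a 2-morphism $\mor{\chi}{X \circ F(p)}{Y \circ F(p)}$, i.e. a vertical morphism $\Phi(p)(X) \to \Phi(p)(Y)$ over $\underline{\hom(X,Y)}$, exactly as Definition \ref{d:locally:small} requires.

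To verify the universal property I would use a composite bijection. Given any $\mor{q}{J}{I}$ in $\catl{C}$ and any vertical 2-cell $\mor{\beta}{X \circ F(q)}{Y \circ F(q)}$, the pair $\tuple{F(q), \beta}$ is an inserter cone over $F(J)$, so it factors through a unique $\mor{\phi}{F(J)}{\mathit{Ins}(X,Y)}$ with $i \circ \phi = F(q)$ and $\pi \circ \phi = \beta$; transposing across $F \dashv U$ yields a unique $\mor{h}{J}{\underline{\hom(X,Y)}}$ with $\phi = \epsilon \circ F(h)$. One then checks $p \circ h = q$ (apply $F$: $F(p \circ h) = i \circ \epsilon \circ F(h) = i \circ \phi = F(q)$, then use full faithfulness) and $\Phi(h)(\chi) = \chi \circ F(h) = \pi \circ \epsilon \circ F(h) = \pi \circ \phi = \beta$. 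Uniqueness of $h$ follows by running both bijections backwards: any competitor $h'$ produces $\phi' = \epsilon \circ F(h')$ with the same inserter data, forcing $\phi' = \phi$ and hence $h' = h$ by faithfulness of the transpose.

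The only place the hypotheses really bite — and thus the step I expect to be the crux — is ensuring the construction stays inside $\catl{C}$: an \emph{arbitrary} $\mor{\phi}{F(J)}{\mathit{Ins}(X,Y)}$ must produce, via $i \circ \phi$, a morphism that genuinely comes from $\catl{C}$. This is precisely where 2-full faithfulness of $F$ is used, since every 1-morphism $F(J) \to F(I)$ equals $F(q)$ for a unique $q$, and there are no stray 2-cells to spoil the correspondence. Consequently the inserter cones over $F(J)$ are in natural bijection with the pairs $\tuple{q, \beta}$ of Definition \ref{d:locally:small}, and $F \dashv U$ converts maps into $\mathit{Ins}(X,Y)$ out of $F(J)$ into maps out of $J$ into $U(\mathit{Ins}(X,Y))$. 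Matching the reindexing $\Phi(h)(-)$ (whiskering by $F(h)$) against whiskering $\pi \circ (-)$ along the counit is then a routine 2-categorical calculation via the triangle identities.
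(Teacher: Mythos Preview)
Your proposal is correct and follows essentially the same route as the paper: construct the inserter of the two parallel arrows $F(I)\rightrightarrows A$, take its image under $U$ as the hom-object, define $p=\eta_I^{-1}\circ U(i)$ and $\chi=\pi\circ\epsilon$, and verify the universal property by transposing inserter cones across $F\dashv U$. The only differences are notational (the paper writes $x,y,\pi,\alpha$ where you write $X,Y,i,\pi$) and that you spell out the uniqueness and the role of full faithfulness of $F$ a little more explicitly than the paper does.
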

\begin{proof}
Let $I$ be an object in $\catl{C}$, and $\mor{x, y}{F(I)}{A}$ two parallel morphisms. Let us write $\mor{\pi}{\underline{\hom(x, y)}}{F(I)}$ for the inserter of $x, y$, and $\mor{\alpha}{x \circ \pi}{y \circ \pi}$ for the inserter's 2-morphism. We shall show that such data mapped by the functor $U$ make $\mathit{fam}_F(A)$ a locally small fibration. Formally, let $p = \eta^{-1}_I \circ U(\pi)$, and $\chi = \alpha \circ \epsilon_{\underline{\hom(x, y)}}$. Observe that $\chi$ is really a 2-morphism $x \circ F(p) \rightarrow y \circ F(p)$:
$$\bfig
\node fuh(0, 1000)[FU(\underline{\hom(x, y)})]
\node fufi(800, 1000)[FUF(I)]
\node fi(1500, 1000)[F(I)]

\node h(0, 1400)[\underline{\hom(x, y)}]
\node tfi(800, 1400)[F(I)]

\arrow/->/[fuh`fufi;FU(\pi)]
\arrow/->/[fufi`fi;F(\eta^{-1}_I)]
\arrow/->/[fufi`tfi;\epsilon_{F(I)}]
\arrow/->/[fuh`h;\epsilon_{\underline{\hom(x, y)}}]
\arrow/->/[h`tfi;\pi]
\arrow/->/[fi`tfi;\mathit{id}]
\efig$$
The square commutes by naturality of the counit $\epsilon$, and commutativity of the triangle on the right side follows from triangle equality of the adjunction. We have to show that for any $\mor{q}{J}{I}$ and any 2-morphism $\mor{\beta}{x \circ F(q)}{y \circ F(q)}$ there exists a unique morphism $\mor{h}{J}{U(\underline{\hom(x, y)})}$ such that $p \circ h = q$ and $\chi \circ F(h) = \beta$. By the definition of inserter $\underline{\hom(x, y)}$, we get a morphism $\mor{\widehat{h}}{F(J)}{\underline{\hom(x, y)}}$ like on the diagram
$$\bfig
\node a1(200, 1000)[\underline{\hom(x, y)}]
\node daa(800, 1000)[F(I)]
\node da(1400, 1000)[A]
%\node a(1800, 1000)[A]
\node f(475, 700)[x \circ \pi]
\node g(930, 700)[y \circ \pi]

\node f2(400, 500)[x \circ F(q)]
\node g2(1000, 500)[y \circ F(q)]

\node a0(400, 1300)[F(J)]

\arrow/->/[a1`daa;\pi]
\arrow|r|/{@{>}@/_1em/}/[daa`da;y]
\arrow/{@{>}@/^1em/}/[daa`da;x]
%\arrow/->/[da`a;\epsilon_A]
\arrow/=>/[f`g;\alpha]
\arrow/=>/[f2`g2;\beta]

\arrow/->/[a0`daa;F(q)]
\arrow/-->/[a0`a1;\widehat{h}]
\efig$$
which via transposition gives a morphism $\mor{h}{J}{U(\underline{\hom(x, y)})}$. The above conditions follows directly from the coreflectivity of $\catl{C}$ and the definition of the inserter. We have $p \circ h = \eta^{-1}_I \circ U(\pi) \circ U(\widehat{h}) \circ \eta_J = \eta^{-1}_I \circ UF(q) \circ \eta_J = q$, and $\chi \circ F(h) = \alpha \circ \epsilon_{\underline{\hom(x, y)}} \circ F(h) = \alpha \circ \widehat{h} = \beta$. For the uniqueness, let us assume that $\mor{h}{J}{U(\underline{\hom(x, y)})}$ is such that $p \circ h = q$ and $\chi \circ F(h) = \beta$. Since $h$ and $\widehat{h}$ uniquely determines each other, it suffices to show the following $\pi \circ \widehat{h} = \pi \circ \epsilon_{\underline{\hom(x, y)}} \circ F(h) = F(p) \circ F(h) = F(q)$, and $\alpha \circ \widehat{h} = \alpha \circ \epsilon_{\underline{\hom(x, y)}} \circ F(h) = \chi \circ F(h) = \beta$.
\end{proof}
\begin{corollary}\label{t:famsmall}
If $\catl{W}$ has (weighted) finite limits and the adjunction $\xy \morphism(0,0)|a|/@{>}@<3pt>/<400,0>[\catl{W}`\catl{C};U] \morphism(0,0)|b|/@{<-}@<-3pt>/<400,0>[\catl{W}`\catl{C};F]\endxy$ makes $\catl{C}$ a coreflective subcategory of $\catl{W}$, then every indexed category $\mathit{fam}_F(A)$ is small.
\end{corollary}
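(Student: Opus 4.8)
The plan is to recognize this corollary as the immediate conjunction of the two preceding results, packaged through Definition~\ref{d:small}. That definition declares a split indexed category small exactly when it possesses a generic object and is locally small, so I would verify these two conditions in turn and then quote the definition.

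For the generic object, I would note that a coreflective embedding $F \dashv U$ furnishes $F$ with a right adjoint, namely the coreflector $U$. Theorem~\ref{t:famgeneric} states that $\mathit{fam}_F(A)$ has a generic object \emph{iff} $F$ admits such a right adjoint; hence the generic object exists and is given by $\Omega = U(A)$, with the adjunction isomorphism $\hom(F(-), A) \approx \hom(-, U(A))$ serving as the required representation of the underlying discrete indexed category.

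For local smallness, I would invoke the theorem proved immediately above, whose hypotheses match ours exactly: $\catl{W}$ has (weighted) finite limits and $\catl{C}$ is coreflective in $\catl{W}$. That theorem exhibits the local hom objects as the inserters $\underline{\hom(x, y)}$ transported along $U$ (with structure morphism $p = \eta^{-1} \circ U(\pi)$ and comparison $\chi = \alpha \circ \epsilon$), so $\mathit{fam}_F(A)$ is locally small under precisely the present assumptions.

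With both a generic object and local smallness in hand, Definition~\ref{d:small} concludes that $\mathit{fam}_F(A)$ is small. I do not anticipate any genuine obstacle here: the substantive content resides entirely in the two cited results, and the corollary is merely their recombination through the definition of smallness.
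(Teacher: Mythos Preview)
Your proposal is correct and matches the paper's approach exactly: the corollary is stated without proof because it is the immediate combination of Theorem~\ref{t:famgeneric} (generic object from the right adjoint $U$) and the preceding local-smallness theorem, assembled via Definition~\ref{d:small}. There is nothing to add.
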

\begin{theorem}[Representation theorem]\label{t:representation}
Let $\catl{W}$ be a 2-category with a notion of discreteness and having finite (weighted) limits.%, and assume that there is an adjunction $\xy \morphism(0,0)|a|/@{>}@<3pt>/<500,0>[\catl{W}`\catl{C};U] \morphism(0,0)|b|/@{<-}@<-3pt>/<500,0>[\catl{W}`\catl{C};F]\endxy$ making $\catl{C}$ a coreflective subcategory of $\catl{W}$.
With every object $A \in \catl{W}$ we may associate, in a canonical way, a $\disc(\catl{W})$-internal category.
% in such a way that this assignment makes $\catl{W}$ (equivalent to) a (2-)subcategory of $\onecat(\mathit{disc}_F(\catl{W}))$.
Moreover, this assignment makes $\catl{W}$ a full (necessarily dense) 2-subcategory of $\cat(\disc(\catl{W}))$ iff $\disc(\catl{W})$ is a dense subcategory of $\catl{W}$. 
\end{theorem}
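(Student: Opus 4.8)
The plan is to factor the entire statement through the \emph{externalisation} 2-functor and thereby reduce the biconditional to a formal cancellation of fully faithful functors. For the \emph{existence} of the canonically associated category I would simply assemble the machinery already in place. A notion of discreteness is an adjunction $\mor{F}{\disc(\catl{W})}{\catl{W}}$ with $F$ left adjoint to $|{-}|$ and (2-)fully faithful, which is precisely the datum exhibiting $\disc(\catl{W})$ as a coreflective subcategory. Since $\catl{W}$ has finite weighted limits, Corollary~\ref{t:famsmall} applies verbatim and shows that the family fibration $\mathit{fam}_F(A) = \hom_\catl{W}(F(-), A)$ is a \emph{small} indexed category for every $A$. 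By the equivalence between split small indexed categories over a finitely complete base and internal categories (Proposition~7.3.8 in \cite{jacobs}), each $\mathit{fam}_F(A)$ corresponds to a $\disc(\catl{W})$-internal category $\catl{A}$, the associated category; the correspondence is 2-functorial, yielding the realisation $\mor{E}{\catl{W}}{\cat(\disc(\catl{W}))}$.

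The heart of the argument is the identity $\mathit{fam}_F = \mathit{fam} \circ E$, where $\mor{\mathit{fam}}{\cat(\disc(\catl{W}))}{\onecat^{\disc(\catl{W})^{op}}}$ is the externalisation 2-functor; this is exactly the content of Example~\ref{e:canonical:family}. By Definition~\ref{d:density}, the discrete objects are dense in $\catl{W}$ precisely when $\mathit{fam}_F$ is fully faithful. Moreover, externalisation is itself fully faithful, since it realises $\cat(\disc(\catl{W}))$ as (equivalent to) the full sub-2-category of \emph{small} indexed categories. As $\mathit{fam}$ therefore induces isomorphisms on all the relevant hom-categories, $E$ induces such isomorphisms if and only if the composite $\mathit{fam} \circ E = \mathit{fam}_F$ does; that is,
$$E \text{ fully faithful} \iff \mathit{fam}_F \text{ fully faithful} \iff \disc(\catl{W}) \text{ is dense in } \catl{W},$$
which is the asserted biconditional.

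It then remains to justify the parenthetical ``necessarily dense''. Here I would use that the discrete internal categories $\disc(\cat(\disc(\catl{W}))) \approx \disc(\catl{W})$ are \emph{always} dense in $\cat(\disc(\catl{W}))$ --- indeed their nerve is exactly the fully faithful externalisation $\mathit{fam}$. When $E$ is fully faithful it exhibits $\catl{W}$ as a full sub-2-category of $\cat(\disc(\catl{W}))$ containing these discrete objects, because $E$ restricts to the inclusion of $\disc(\catl{W})$. I would then invoke the standard fact, whose $\onecat$-enriched form I would spell out, that any full subcategory wedged between a dense subcategory and the whole category is itself dense; applying it shows that $E(\catl{W})$ is dense, i.e.\ $E$ is a dense embedding.

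I expect the main obstacle to be bookkeeping at the enriched level rather than any genuine difficulty: one must read ``fully faithful'' consistently as isomorphism on hom-\emph{categories}, and check that both the cancellation step and the intermediate-subcategory density lemma hold in this $\onecat$-enriched sense (they do, by the same 2-cell chase as in the 1-categorical case), as well as that the small-indexed-categories/internal-categories equivalence is genuinely 2-functorial. The one point worth verifying carefully is the fullness half of the intermediate-density lemma: given a transformation between the $\catl{W}$-indexed representables, one recovers the inducing morphism only after restricting to discrete objects, and one must then use naturality together with the faithfulness coming from density of $\disc(\catl{W})$ to confirm that this morphism reproduces the original transformation on \emph{all} of $\catl{W}$.
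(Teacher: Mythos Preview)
Your proposal is correct and follows essentially the same route as the paper: both arguments factor the representation through the family functor $\mathit{fam}_F$, invoke Corollary~\ref{t:famsmall} to land in small indexed categories (hence internal categories), and then identify full faithfulness of $E$ with full faithfulness of $\mathit{fam}_F$, which is the definition of density. The paper's proof is considerably terser---it compresses the cancellation step and does not spell out the ``necessarily dense'' parenthetical at all---so your explicit treatment of $\mathit{fam}_F = \mathit{fam} \circ E$ with $\mathit{fam}$ fully faithful, and your intermediate-subcategory density argument, add detail the paper leaves to the reader rather than constituting a different approach.
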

\begin{proof} Density of $\disc(\catl{W})$ in $\catl{W}$ by definition is equivalent to saying that the 2-functor $\mor{\mathit{fam}_F}{\catl{W}}{\onecat^{\disc(\catl{W})^{op}}}$
is fully faithful. It is then also essentially surjective on objects. Therefore, by Corollary \ref{t:famsmall}, $\catl{W}$ is equivalent to a full subcategory of $\disc(\catl{W})$-internal categories.
\end{proof}
\begin{example}[$\onecat$ with canonical discreteness]
The canonical externalisation of a category $\catl{C}$ gives the usual family fibration $\mor{\mathit{fam}(\catl{C})}{\catw{Set}^{op}}{\onecat}$. This fibration is small precisely when category $\catl{C}$ is small. The category associated to $\catl{C}$ is (equivalent to) the same category.   
\end{example}
\begin{example}[$\onecat$ with $0$]
The subcategory of $\onecat$ consisting of a single empty category $0$ gives a non-dense notion of discreteness on $\onecat$.
Since $\catl{C}^0 \approx 1$ for any category $\catl{C}$, there is only one associated category to every object in $\onecat$.
\end{example}
\begin{example}[$\onecat$ with $1$]
The subcategory of $\onecat$ consisting of a terminal category $1$ does not give a notion of discreteness on $\onecat$, simply because the terminal category functor $1 \rightarrow \onecat$ does not have right adjoint. However, the terminal category is a 2-generator in $\onecat$. The family fibration does not loose any information about objects in $\onecat$, but every non-trivial fibration is not small, therefore does not have the associated category.
\end{example}
%The theorem says that if $\catl{W}$ admits \emph{any} notion of discreteness $\mathit{Disc}_F(\catl{W})$, then $\catl{W}$ is a subcategory of $\catw{Cat}(\mathit{Disc}_F(\catl{W}))$ --- the category of categories internal to the discrete objects. And it is a full subcategory if and only if its discrete objects are dense.
%\begin{theorem}\label{t:catization}
%If $\catl{W}$ has (weighted) finite limits and the adjunction $\xy \morphism(0,0)|a|/@{>}@<3pt>/<400,0>[\catl{W}`\catl{C};U] \morphism(0,0)|b|/@{<-}@<-3pt>/<400,0>[\catl{W}`\catl{C};F]\endxy$ makes $\catl{C}$ a coreflective subcategory of $\catl{W}$, then with every object $A \in \catl{W}$ we may associate a $\catl{C}$-internal category that corresponds to the internalization of a small fibration $\mathit{fam}_F(A)$. Moreover, this assignment makes $\catl{W}$ a full dense subcategory of $\catw{Cat}(\catl{C})$ provided $\catl{C}$ is dense in $\catl{W}$. 
%\end{theorem}
We shall write
${\mor{E}{\catl{W}}{\cat(\disc(\catl{W}))}}$
for the functor from Theorem \ref{t:representation} representing an object from $\catl{W}$ as an internal category.
\begin{lemma}\label{l:preservation}
Let $\catl{W}$ be a 2-category with a notion of discreteness. The functor
$\mor{E}{\catl{W}}{\cat(\disc(\catl{W}))}$
preserves limits and discrete objects.  
\end{lemma}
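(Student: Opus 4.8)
The plan is to exploit the factorisation of $E$ through the family functor. Recall from the Representation Theorem (Theorem~\ref{t:representation}) together with Corollary~\ref{t:famsmall} that $E$ is obtained by first applying the covariant 2-functor $\mor{\mathit{fam}_F = \hom(F(-),-)}{\catl{W}}{\onecat^{\disc(\catl{W})^{op}}}$ --- which by Corollary~\ref{t:famsmall} lands in the \emph{small} indexed categories --- and then transporting along the externalisation equivalence between $\disc(\catl{W})$-internal categories and small $\disc(\catl{W})$-indexed categories (Proposition 7.3.8 in \cite{jacobs}), so that $\mathit{fam}_F \cong \mathit{fam} \circ E$. Since $\mathit{fam}$ is an equivalence of 2-categories it preserves and \emph{reflects} all (weighted) limits and detects discreteness; hence it suffices to verify both claims for $\mathit{fam}_F$.

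For the preservation of limits, I would use that weighted limits in the 2-functor category $\onecat^{\disc(\catl{W})^{op}}$ are computed pointwise, so that it is enough to check, for each fixed discrete object $X$, that the evaluation $\mor{\hom_{\catl{W}}(F(X),-)}{\catl{W}}{\onecat}$ preserves the limits in question. But this is a covariant representable 2-functor, and representable 2-functors preserve all weighted limits --- this is exactly the universal property of a weighted limit read through the 2-Yoneda embedding. Consequently $\mathit{fam}_F$ preserves limits; since $\mathit{fam}$ reflects them and $\mathit{fam}\circ E \cong \mathit{fam}_F$, the functor $E$ preserves limits as well.

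For the preservation of discrete objects, let $D \in \disc(\catl{W})$. Because $F$ is 2-fully faithful its unit is an isomorphism, and by the triangle identity the counit $\mor{\epsilon_D}{F|D|}{D}$ is then an isomorphism too, so $D \cong F|D|$. Therefore, for every $X$,
$$\mathit{fam}_F(D)(X) = \hom_{\catl{W}}(F(X), D) \cong \hom_{\catl{W}}(F(X), F|D|) \cong \hom_{\disc(\catl{W})}(X, |D|),$$
the last isomorphism again by 2-full-faithfulness of $F$. Thus $\mathit{fam}_F(D)$ is naturally isomorphic to the representable indexed category on $|D|$, whose fibres are discrete sets; this is precisely the externalisation of the discrete $\disc(\catl{W})$-internal category on $|D|$. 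Hence $E(D)$ is that discrete internal category, which is discrete in $\cat(\disc(\catl{W}))$.

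The main obstacle I expect is bookkeeping rather than a genuine difficulty: one must confirm that the equivalence of Proposition 7.3.8 is an equivalence of 2-categories that both preserves and reflects the relevant weighted 2-limits and sends representable (i.e.~discrete-fibred) indexed categories to discrete internal categories, and that ``pointwise limits preserved by representables'' is applied to the correct class of weighted limits. As a safeguard one can argue discreteness directly from the inserter construction of $E(D)_1$: since $D$ is discrete there are no non-identity 2-cells with codomain $D$, so the inserter defining $E(D)_1$ collapses to the ordinary equaliser of the two projections $\mor{\pi_0,\pi_1}{|D|\times|D|}{|D|}$ (using $|D\times D|\cong|D|\times|D|$ and that $\epsilon_D$ is iso), namely the diagonal $|D|\to|D|\times|D|$; this exhibits $E(D)_1 \cong |D| = E(D)_0$ with $\mathit{dom}=\mathit{cod}=\mathit{id}$, confirming discreteness.
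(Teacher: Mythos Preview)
Your proposal is correct and is essentially a detailed unfolding of the paper's one-line proof: the paper's appeal to the ``2-Yoneda lemma'' is exactly your argument that $\mathit{fam}_F = \hom(F(-),-)$ preserves limits pointwise via representables, transported back through the externalisation equivalence, and the paper's ``by the definition of discreteness'' is your computation that $\mathit{fam}_F(D)\cong\hom_{\disc(\catl{W})}(-,|D|)$ via full faithfulness of $F$. Your additional safeguard via the explicit inserter collapse is a nice redundancy check but not needed.
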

\begin{proof}
It preserves limits by 2-Yoneda lemma, and discrete objects by the definition of discreteness.
\end{proof}
\begin{theorem}\label{t:external:models}
Let $\catl{W}$ be a finitely (weighted) complete 2-category with a notion of discreteness. If an object $A \in \catl{W}$ has internal connectives (internal terminal/initial value, internal (closed) products, coproducts) then its associated category $E(A)$ has corresponding connectives in the usual sense. Moreover, if discrete objects are dense, then the converse holds as well.
\end{theorem}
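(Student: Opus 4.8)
The plan is to exploit that the representation $2$-functor $\mor{E}{\catl{W}}{\cat(\disc(\catl{W}))}$ of Theorem \ref{t:representation} is \emph{structure preserving} and that each internal connective of Definitions \ref{d:internal:cc} and \ref{d:internal:ccc} is nothing but the existence of a (left or right) adjoint to a morphism built canonically from finite products, the discretisation $|{-}|$, and the counit $\epsilon$. By Lemma \ref{l:preservation}, $E$ preserves finite (weighted) limits and discrete objects, so $E(1) \approx 1$, $E(A \times A) \approx E(A) \times E(A)$, and $E(\Delta_A)$ is the diagonal $\Delta_{E(A)}$ while $E(!)$ is the canonical $\mor{!}{E(A)}{1}$. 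The first thing I would record is the compatibility of $E$ with the discreteness adjunction itself: since by Definition \ref{d:can:associated:category} the associated category $E(A)$ has object-of-objects $\catl{A}_0 = |A|$, one checks directly from the inserter construction that $|E(A)| \approx E(|A|)$ and that $E(\epsilon_A) \approx \epsilon_{E(A)}$. Under these identifications $E$ carries the closed-structure morphism of Definition \ref{d:internal:ccc} for $A$ to the corresponding closed-structure morphism for $E(A)$.

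For the forward implication the key fact is purely formal: every $2$-functor preserves adjunctions, since an adjunction is specified by unit and counit $2$-cells subject to the triangle equalities, and all of these are transported functorially. Consequently, if $\mor{!}{A}{1}$, the diagonal $\Delta_A$, or the closed-structure morphism admits a (left or right) adjoint in $\catl{W}$, then its $E$-image admits the same adjoint; by the previous paragraph these images are exactly the canonical morphisms for $E(A)$, so $E(A)$ acquires an internal terminal/initial value $\iterm{}/\iinit{}$, internal (co)products $\iprod{}/\icoprod{}$, and internal closedness. Finally, Example \ref{e:indexed:ccc} (together with the analogous unwindings for finite products and coproducts via the externalisation $\mathit{fam}_F$) identifies these internal connectives on the $\disc(\catl{W})$-internal category $E(A)$ with the corresponding connectives ``in the usual sense'', which settles the forward direction.

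For the converse, suppose discrete objects are dense; then Theorem \ref{t:representation} makes $E$ fully faithful, and a fully faithful $2$-functor \emph{reflects} adjunctions. Concretely, if $\Delta_{E(A)} = E(\Delta_A)$ has a right adjoint $R$ in $\cat(\disc(\catl{W}))$, then fullness on $1$-cells together with $E(A \times A) \approx E(A) \times E(A)$ yields $\mor{r}{A \times A}{A}$ with $E(r) \approx R$, and the isomorphism of hom-categories lifts the unit, counit, and triangle identities back to $\catl{W}$, exhibiting $\Delta_A \dashv r$, i.e.\ $\iprod_A = r$. The same argument applied to $\mor{!}{A}{1}$ and to the closed-structure morphism (again using $|E(A)| \approx E(|A|)$ and $E(\epsilon_A) \approx \epsilon_{E(A)}$ to recognise the relevant morphism as lying in the image of $E$) transports every connective of $E(A)$ back to $A$.

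The main obstacle I anticipate is not the adjunction bookkeeping, which is formal, but precisely the compatibility of $E$ with the discreteness structure that the \emph{closed} connective requires: one must verify carefully that $E$ intertwines the two discretisations and their counits, namely $|E(A)| \approx E(|A|)$ and $E(\epsilon_A) \approx \epsilon_{E(A)}$. This is exactly where the explicit inserter construction of $\catl{A}_1$ over $\catl{A}_0 = |A|$, and the fact that the counit-induced ``choosing'' $2$-morphism $\alpha$ is $E$ of $\epsilon_A$, must be used; once this identification is in hand, everything else reduces to the slogan that $2$-functors preserve, and fully faithful $2$-functors reflect, adjunctions.
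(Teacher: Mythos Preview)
Your proposal is correct and follows essentially the same route as the paper: the forward direction is Lemma~\ref{l:preservation} (so $E$ carries the canonical structural morphisms to the corresponding ones for $E(A)$) together with the fact that $2$-functors preserve adjunctions, and the converse uses Theorem~\ref{t:representation} to make $E$ fully faithful, whence adjunctions are reflected. The paper's proof is a two-line summary of exactly this; your additional care about $|E(A)| \approx E(|A|)$ and $E(\epsilon_A) \approx \epsilon_{E(A)}$ for the closed case is a detail the paper leaves implicit under ``preserves discrete objects''.
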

\begin{proof}
One direction follows from Lemma \ref{l:preservation} and the fact that 2-functors preserve adjunctions. The other direction follows from the same facts plus Theorem \ref{t:representation} saying that $\catl{W}$ is a full subcategory of $\cat(\disc(\catl{W}))$ provided $\disc(\catl{W})$ is dense.
\end{proof}
\begin{corollary}
Theorem \ref{d:parametrised:lambda} from Section \ref{s:models} holds.
\end{corollary}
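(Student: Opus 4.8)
The plan is to read the statement off Theorem~\ref{t:external:models} after translating ``$E(A)$ is cartesian closed'' into a statement about the individual hom-categories. Given a cartesian 2-category $\catl{W}$ with a notion of discreteness and an internally cartesian closed object $A \in \catl{W}$, I would first form the associated $\disc(\catl{W})$-internal category $E(A)$ and apply Theorem~\ref{t:external:models} to conclude that $E(A)$ is cartesian closed (and, in the cocartesian case, cocartesian) in the usual internal sense. The real work is then to unwind what this says about the categories $\hom_{\catl{W}}(X,A)$ for discrete $X$.

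For that translation I would use that the externalisation of $E(A)$ is exactly the family fibration $\mor{\mathit{fam}_F(A)}{\disc(\catl{W})^{op}}{\onecat}$ (Example~\ref{e:canonical:family}), whose value on a discrete object $X$ is $\hom(F(X), A) \approx \hom_{\catl{W}}(X, A)$. By Example~\ref{e:indexed:ccc} an internal category is cartesian closed precisely when its externalisation is a cartesian closed indexed category, and by the characterisation recalled in Example~\ref{e:failure:ccc} a split indexed category is cartesian closed iff each fibre is cartesian closed and reindexing preserves the structure. In particular each fibre $\hom_{\catl{W}}(X,A)$ is cartesian closed, which is the first assertion; the cocartesian half is the formal dual, replacing right adjoints, internal products and exponents by left adjoints, internal coproducts and their duals.

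Since Theorem~\ref{t:external:models} is stated for finitely (weighted) complete $\catl{W}$ whereas Theorem~\ref{d:parametrised:lambda} assumes only that $\catl{W}$ is cartesian, I would in fact prefer a direct argument through the family functor $\mor{\mathit{fam}_F}{\catl{W}}{\onecat^{\disc(\catl{W})^{op}}}$, which never forces $\catl{W}$ to be complete. This functor preserves finite products (products in the functor 2-category are computed pointwise and $\hom(F(X), -)$ preserves them) and sends discrete objects together with the counit $\epsilon$ of discreteness to their fibrewise counterparts, by the same 2-Yoneda reasoning as in Lemma~\ref{l:preservation}. As every 2-functor preserves adjunctions, applying $\mathit{fam}_F$ to the defining adjunction of Definition~\ref{d:internal:ccc} transports the internal cartesian closed structure of $A$ to one on $\mathit{fam}_F(A)$ in $\onecat^{\disc(\catl{W})^{op}}$, and Example~\ref{e:indexed:ccc} again delivers cartesian closedness of the fibres.

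The step I expect to be the main obstacle is matching the two notions of discreteness so that Example~\ref{e:indexed:ccc} applies verbatim: one must verify that the canonical discreteness on $\onecat^{\disc(\catl{W})^{op}}$ (fibrewise discrete indexed categories) receives the discrete objects of $\catl{W}$ under $\mathit{fam}_F$, and that the specific morphism $\langle \iprod \circ (\id{} \times \epsilon), \pi \rangle$ of Definition~\ref{d:internal:ccc} is carried to the corresponding morphism for $\mathit{fam}_F(A)$. Once this compatibility is in place the transport of the adjunction, and hence the whole conclusion, is automatic.
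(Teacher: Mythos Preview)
Your proposal is correct and follows the paper's own route: the paper deduces the corollary directly from Theorem~\ref{t:external:models}, whose proof is precisely ``$E$ (equivalently $\mathit{fam}_F$) preserves finite products and discrete objects, and 2-functors preserve adjunctions,'' after which Example~\ref{e:indexed:ccc} identifies internal cartesian closedness of the indexed category with fibrewise cartesian closedness. Your additional observation that Theorem~\ref{t:external:models} assumes finite weighted completeness while Theorem~\ref{d:parametrised:lambda} only assumes $\catl{W}$ cartesian is well taken, and your direct argument through $\mathit{fam}_F$---which needs only products, not inserters---is exactly the right way to close that gap; the paper leaves this discrepancy implicit.
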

The notion of an associated category allows us to better understand the Beck-Chevalley condition for fibred (co)products. Let us recall that a fibration represented as an indexed category $\mor{\Phi}{\catl{C}^{op}}{\onecat}$ over a finitely complete category $\catl{C}$ has (co)products if for each morphism $\mor{s}{X}{Y}$ in $\catl{C}$ the functor $\Phi(s)$ has right $\prod_s$ (resp. left $\coprod_s$) adjoint. Furthermore, the (co)products satisfy the Beck-Chevalley condition if for every pullback:
$$\bfig
\node y(1000, 1000)[Y]
\node x(1000, 1250)[X]
\node i(600, 1000)[I]
\node p(600, 1250)[P]
\arrow|r|/->/[x`y;s]
\arrow|r|/->/[i`y;i]
\arrow/->/[p`x;\pi_1]
\arrow/->/[p`i;\pi_2]
\efig$$
the canonical natural transformation $\Phi(i) \circ \prod_s \rightarrow \prod_{\pi_2} \circ \pi_1$ (resp. $\coprod_{\pi_2} \circ \pi_1 \rightarrow \Phi(i) \circ \coprod_s$) is an isomorphism. 
\begin{corollary}\label{c:bc:condition}
Let $\catl{W}$ be a 2-category with a notion of discreteness. An object $A \in \catl{W}$ has polymorphic (co)products iff its family fibration has (co)products along all morphisms. Moreover if $\catl{W}$ has finite limits, then these (co)products are stable iff in the family fibration (co)products satisfy the Beck-Chevalley condition.
\end{corollary}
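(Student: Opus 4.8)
The plan is to prove the two biconditionals separately, reducing each to an unwinding of the relevant definitions that matches the fibrational notions against the 2-categorical ones.

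First, for the existence statement, I would observe that the family fibration $\mathit{fam}_F(A)$ is indexed over $\disc(\catl{W})$ (with $\mor{F}{\disc(\catl{W})}{\catl{W}}$ the inclusion), so that its fibre over a discrete object $X$ is the category $\hom(X, A)$ and the reindexing functor along a morphism $\mor{s}{X}{Y}$ between discrete objects is precomposition $\mathit{fam}_F(A)(s) = (-) \circ s \colon \hom(Y, A) \to \hom(X, A)$. By Definition \ref{d:param:prod}, a parametrised element $\tau_X$ has a product along $s$ precisely when the right Kan extension $\prod_s \tau_X$ exists, and the defining bijection $\hom(h, \prod_s \tau_X) \approx \hom(h \circ s, \tau_X)$, natural in $h$, is exactly the statement that $\prod_s \tau_X$ provides the value at $\tau_X$ of a right adjoint to $(-) \circ s$. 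Since a functor admits a right adjoint iff such a value exists at every object, the assertion that $A$ has polymorphic products (the Kan extensions existing for all morphisms $s$ between discrete objects and all $\tau_X$) is literally equivalent to the assertion that $\mathit{fam}_F(A)(s)$ has a right adjoint $\prod_s$ for every morphism $s$ of $\disc(\catl{W})$, i.e. to the fibration having products along all morphisms. The coproduct case is dual, replacing right Kan extensions and right adjoints by left ones.

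For the stability statement I would invoke the remark following Definition \ref{d:param:prod}: when the codomain is discrete the comma object $\comma{i}{s}$ degenerates to the pullback of $s$ along $i$, since discrete objects carry no non-identity 2-cells and the comma 2-cell is forced to be an identity. Assuming $\catl{W}$ has finite limits, for every $\mor{i}{I}{Y}$ in $\disc(\catl{W})$ I would form this pullback $P = \comma{i}{s}$ with projections $\pi_1 \colon P \to X$ and $\pi_2 \colon P \to I$. Writing $\Phi = \mathit{fam}_F(A)$ and using the translation of the first part, the pointwise condition of Definition \ref{d:param:prod} --- that $(\prod_s \tau_X) \circ i$ is the right Kan extension of $\tau_X \circ \pi_1$ along $\pi_2$ --- reads fibrationally as $\Phi(i) \circ \prod_s (\tau_X) \approx \prod_{\pi_2}\bigl(\Phi(\pi_1)(\tau_X)\bigr)$, which is precisely the assertion that the canonical comparison $\Phi(i) \circ \prod_s \to \prod_{\pi_2} \circ \Phi(\pi_1)$ is an isomorphism, i.e. the Beck--Chevalley condition. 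Running this identification in both directions, and dually for coproducts, yields the equivalence.

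The step I expect to require the most care is this last identification: one must check that the \emph{canonical} comparison 2-cell produced by the universal property of the pointwise Kan extension (built from the comma 2-cell $\epsilon$ and the projections) coincides with the \emph{canonical} Beck--Chevalley natural transformation (built as the mate of the pullback square through the (co)units of the $\Phi(\pi_2) \dashv \prod_{\pi_2}$ adjunction). This is pure bookkeeping of universal arrows, but it is what makes ``pointwise'' and ``Beck--Chevalley'' literally the same condition rather than merely analogous. Verifying that the comma object of discrete objects is again discrete and coincides with the pullback --- so that both comparisons live in the same fibre $\hom(I, A)$ --- is the only place where the hypothesis that $\catl{W}$ has finite limits is genuinely used.
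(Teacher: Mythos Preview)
Your proposal is correct and matches the paper's implicit approach: the paper states this as a corollary without proof, relying on the reader to unwind Definition~\ref{d:param:prod} against the fibrational definitions exactly as you do---Kan extensions along $s$ are by definition adjoints to precomposition $(-)\circ s$, and the pointwise condition over discrete objects degenerates (via the remark following Definition~\ref{d:param:prod}) to the pullback-stability that is Beck--Chevalley. Your explicit bookkeeping of the canonical comparison maps and your observation that the comma square collapses to a pullback when the target is discrete are precisely the details the paper is tacitly invoking; your write-up is in fact more careful than the paper itself, which simply asserts the equivalence.
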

The above corollary together with Theorem \ref{t:external:models} imply that our 2-categorical models for polymorphism externalise to fibrational models \cite{jacobs} \cite{plcat}. However, if discrete objects are not dense, we may not rely on the external fibrational semantics. To see this, consider a monoidal-enriched category --- its externalisation gives the usual family fibration on the \emph{underlying} category; therefore fibrational semantics discard enrichment and collapse to semantics in an ordinary category. 
%\begin{proof}
%\end{proof}
%An object $A \in \catl{W}$ has the dual object $A^{op} \in \catl{W}$ if $E(A)^{op}$ is isomorphic to $E(A^{op})$.
%\begin{corollary}
%Let $\catl{W}$ be a finitely complete 2-category such that $\catw{Disc}_F(\catl{W})$ is its coreflective subcategory. An \emph{internally} $F$-cartesian closed object $A \in \catl{W}$ that has a dual object $A^{op}$ induces the exponentiation morphism
%$$A^{op} \times A \to^{\mathit{exp}} A$$
%\end{corollary}
%

We close this section by merely mentioning the left adjoint to the representation functor.
\begin{theorem}
Let $\catl{W}$ be a 2-category and assume that there is an adjunction $\xy \morphism(0,0)|a|/@{>}@<3pt>/<400,0>[\catl{W}`\catl{C};U] \morphism(0,0)|b|/@{<-}@<-3pt>/<400,0>[\catl{W}`\catl{C};F]\endxy$ making $\catl{C}$ a coreflective subcategory of $\catl{W}$.
The 2-functor $\mor{\mathit{fam}_F(-)}{\catl{W}}{\onecat^{\catl{C}^{op}}}$ has left adjoint ${\mor{L}{\onecat^{\catl{C}^{op}}}{\catl{W}}}$ expressed as the coend:
$$L(H) = \int^{X \in \catl{C}} H(X) \times F(X)$$
provided $\catl{W}$ is sufficiently cocomplete. Moreover, if $\catl{W}$ is finitely complete, the above formula induces adjunction $\catl{W} \leftrightarrows\cat(\catl{C})$.
\end{theorem}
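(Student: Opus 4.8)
The plan is to recognise this as the enriched \emph{nerve--realisation} (weighted-colimit) adjunction. The functor $\mathit{fam}_F = \hom_{\catl{W}}(F(-), =)$ is the nerve associated to $\mor{F}{\catl{C}}{\catl{W}}$, and its left adjoint sends a weight to the corresponding weighted colimit of $F$. Concretely, the coend $\int^{X} H(X) \times F(X)$ is the colimit of $F$ weighted by $H$, where $H(X) \times F(X)$ is read as the tensor $H(X) \otimes F(X)$ of the object $F(X) \in \catl{W}$ by the category $H(X)$ in the sense of Definition \ref{d:tensor}. The phrase ``sufficiently cocomplete'' means precisely that $\catl{W}$ admits all these tensors and the coend over $\catl{C}$, so that $L(H)$ is a well-defined object of $\catl{W}$, functorial in $H$.

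First I would fix $A \in \catl{W}$ and a presheaf $\mor{H}{\catl{C}^{op}}{\onecat}$ and produce the required isomorphism of hom-categories by the standard chain of canonical isomorphisms. Homming out of the coend turns it into an end, by continuity of the representable $\hom_{\catl{W}}(-, A)$; the defining property of the tensor (Definition \ref{d:tensor}) then rewrites $\hom_{\catl{W}}(H(X) \otimes F(X), A)$ as $\hom_{\onecat}(H(X), \hom_{\catl{W}}(F(X), A)) = \hom_{\onecat}(H(X), \mathit{fam}_F(A)(X))$; and finally the end formula for the hom-category of the $\onecat$-enriched presheaf 2-category identifies $\int_{X} \hom_{\onecat}(H(X), \mathit{fam}_F(A)(X))$ with the category of natural transformations and modifications $\hom_{\onecat^{\catl{C}^{op}}}(H, \mathit{fam}_F(A))$. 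Each step is an isomorphism of categories, 2-natural in $H$ and $A$, so the composite exhibits $L \dashv \mathit{fam}_F$.

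For the second claim I would invoke the earlier smallness results. By Corollary \ref{t:famsmall}, when $\catl{W}$ is finitely (weighted) complete and $\catl{C}$ is coreflective, every family fibration $\mathit{fam}_F(A)$ is a \emph{small} indexed category; since small indexed categories over a finitely complete $\catl{C}$ coincide with $\catl{C}$-internal categories (Proposition 7.3.8 of \cite{jacobs}), the functor $\mathit{fam}_F$ corestricts to $\mor{E}{\catl{W}}{\cat(\catl{C})}$ through the fully faithful externalisation embedding $\mor{\mathit{fam}}{\cat(\catl{C})}{\onecat^{\catl{C}^{op}}}$. Setting $I = L \circ \mathit{fam}$ and transporting the isomorphism above across the full faithfulness of externalisation yields $\hom_{\catl{W}}(I(\catl{A}), A) \approx \hom_{\cat(\catl{C})}(\catl{A}, E(A))$, which is exactly the adjunction $\catl{W} \leftrightarrows \cat(\catl{C})$ with $I \dashv E$.

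The main obstacle is \emph{not} the formal coend calculus, which is routine, but the two-dimensional bookkeeping. One must interpret the coend as a genuine $\onecat$-weighted colimit in $\catl{W}$, so that its universal property delivers an isomorphism of hom-\emph{categories} rather than a mere bijection of hom-sets; one must check that $\hom_{\catl{W}}(-, A)$ carries this weighted colimit to the corresponding weighted limit in $\onecat$; and one must verify that all intermediate isomorphisms are 2-natural and compatible with modifications, so that they assemble into a bona fide 2-adjunction. The hypotheses ``sufficiently cocomplete'' and ``finitely complete'' are precisely what guarantee, respectively, the existence of the realisation $L$ and the corestriction of $\mathit{fam}_F$ through $\cat(\catl{C})$.
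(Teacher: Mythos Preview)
Your proposal is correct and follows essentially the same route as the paper: the same chain of canonical isomorphisms (coend-to-end, tensor universal property, end formula for natural transformations), and the same appeal to smallness of $\mathit{fam}_F(A)$ to obtain the restricted adjunction with $\cat(\catl{C})$. Your framing in terms of nerve--realisation and your explicit attention to the 2-dimensional bookkeeping add welcome detail, but the underlying argument is identical.
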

\begin{proof}
Let $\mor{H}{\catl{C}^{op}}{\onecat}$ be an indexed category, and $A$ an object in $\catl{W}$. There are natural 2-isomorphisms:
\begin{center}
\begin{tabular}{c}
$\hom(\int^{X \in \catl{C}} H(X) \times F(X), A)$\\
\hline\hline
$\int_{X \in \catl{C}} \hom(H(X) \times F(X), A)$\\
\hline\hline
$\int_{X \in \catl{C}} \hom(H(X), \hom(F(X), A))$\\
\hline\hline
$\hom(H, \hom(F(-), A))$\\
\hline\hline
$\hom(H, \mathit{fam}_F(A))$\\
\end{tabular}
\end{center}
where the first isomorphism exists because $\hom$-functors turn colimits into limits, the second is the definition of the tensor with a category, the third is the definition of the object of natural transformation, and the last one is the definition of the family fibration. By Theorem \ref{t:representation} the above restricts to the adjunction  $\catl{W} \leftrightarrows\cat(\catl{C})$.
\end{proof} 

%\section{Relative powers}

%\section{Free semantics}

\section{Conclusions}
\label{s:conclusions}
In the paper we showed that a natural categorical framework for lambda calculi is encapsulated by a 2-category with a notion of discreteness: we provided a robust concept of internal closedness and a concept of polymorphism generalising notions of cartesian closedness and products for fibrations and for internal categories. We characterised ``ad hoc'' polymorphism and proved a 2-categorical version of Freyd incompleteness theorem: arbitrary ``ad hoc'' polymorphism is not possible in non-degenerate objects. As a simple corollary we obtained the Freyd theorem for categories internal to any \emph{tensored} category --- that is, if a locally small category $\catl{C}$ has ``constant coproducts'' $\coprod_X A$ for any small set $X$ and each object $A \in \catl{C}$, then a category \emph{internal} to $\catl{C}$ that has $\catl{C}$-indexed (internal) products is necessary \emph{internally} posetal. Finally, we developed the theory of associated categories linking our 2-categorical models with well-studied fibrational/internal models for lambda calculi. We generalised the concept of externalisation of a relative category to the concept of externalisation of an object in an arbitrary 2-category with a notion of discreteness. We showed that the process of externalisation preserves models for lambda calculi, and proved, that if a 2-category is finitely weighted complete, then the fibration obtained from externalisation of an object is \emph{small}, thus equivalent to an internal category. 
%

% that's all folks
\end{document}